\theoremstyle{plain}
\newtheorem{lem}{Lemma}[section]
\newtheorem{prop}[lem]{Proposition}
\newtheorem{thm}[lem]{Theorem}
\theoremstyle{definition}
\newtheorem{defn}[lem]{Definition}
\newtheorem{ex}[lem]{Example}
\newtheorem{question}[lem]{Question}
\newtheorem{disc}[lem]{Remark}
\newtheorem{fact}[lem]{Fact}
\newcommand{\cat}[1]{\mathcal{#1}}
\newcommand{\catd}{\cat{D}}
\newcommand{\cata}{\cat{A}}
\newcommand{\catac}{\cat{A}_C}
\newcommand{\catbc}{\cat{B}_C}
\newcommand{\pd}{\operatorname{pd}}	
\newcommand{\gdim}{\mathrm{G}\text{-}\!\dim}	
\newcommand{\gkdim}[1]{\mathrm{G}_{#1}\text{-}\!\dim}	
\newcommand{\gcdim}{\gkdim{C}}
\newcommand{\fd}{\operatorname{fd}}
\newcommand{\edim}{\operatorname{edim}}
\newcommand{\HH}{\operatorname{H}}
\newcommand{\Hom}{\operatorname{Hom}}
\newcommand{\Ker}{\operatorname{Ker}}
\newcommand{\ideal}[1]{\mathfrak{#1}}
\newcommand{\m}{\ideal{m}}
\newcommand{\n}{\ideal{n}}
\newcommand{\comp}[1]{\widehat{#1}}
\newcommand{\ol}{\overline}
\newcommand{\wti}{\widetilde}
\newcommand{\bbz}{\mathbb{Z}}
\newcommand{\xra}{\xrightarrow}
\newcommand{\vf}{\varphi}
\newcommand{\y}{\mathbf{y}}
\newcommand{\x}{\mathbf{x}}
\renewcommand{\geq}{\geqslant}
\renewcommand{\ker}{\Ker}
\newcommand{\Ext}[4][R]{\operatorname{Ext}_{#1}^{#2}(#3,#4)}	
\newcommand{\Rhom}[3][R]{\mathbf{R}\!\operatorname{Hom}_{#1}(#2,#3)}	
\newcommand{\Lotimes}[3][R]{#2\otimes^{\mathbf{L}}_{#1}#3}
\newcommand{\Otimes}[3][R]{#2\otimes_{#1}#3}
\renewcommand{\Hom}[3][R]{\operatorname{Hom}_{#1}(#2,#3)}	
\newcommand{\Tor}[4][R]{\operatorname{Tor}^{#1}_{#2}(#3,#4)}
\newcommand{\catdfb}{\catd^{\text{f}}_{\text{b}}}
\newcommand{\catdb}{\catd_{\text{b}}}
\numberwithin{equation}{lem}
\newcommand{\Hod}{\operatorname{H-dim}}
\begin{document}

\bibliographystyle{amsplain}

\author{Sean K. Sather-Wagstaff}
\address{School of Mathematical and Statistical Sciences,
Clemson University,
O-110 Martin Hall, Box 340975, Clemson, S.C. 29634
USA}
\email{ssather@clemson.edu}
\urladdr{https://ssather.people.clemson.edu/}

\title{Ascent Properties for Test Modules}

\date{\today}

\dedicatory{Dedicated to Roger and Sylvia Wiegand on the occasion of their 150th birthday}

\keywords{ascent, commutative differential graded algebras, flat dimension, G-dimension, Gorenstein dimension, local ring homomorphisms, projective dimension, semidualizing modules, test modules}
\subjclass[2010]{13B40,  
13D05,  
13D07,  
13D09
}

\begin{abstract}
We investigate modules for which vanishing of Tor-modules implies finiteness of homological dimensions (e.g., projective dimension and G-dimension). In particular, we answer a question of O. Celikbas and Sather-Wagstaff about ascent properties of such modules over residually algebraic flat local ring homomorphisms. To accomplish this, we consider ascent and descent properties over local ring homomorphisms of finite flat dimension, and for flat extensions of finite dimensional differential graded algebras. 
\end{abstract}

\maketitle


\section{Introduction} \label{sec190908a}

Throughout, $(R,\m,k)\xra\vf(S,\n,l)$ is a local homomorphism between (commutative noetherian) local rings.

\

A classical result in homological commutative algebra says that for each finitely generated $R$-module $N$, if 
$\Tor ikN=0$ for all $i\gg 0$, then $\pd_R(N)<\infty$.  
In the language of~\cite{celikbas:tgp}, this says that the residue field $k$ is a ``$\pd$-test $R$-module'':
a finitely generated $R$-module 
$M$ is a \emph{$\pd$-test module} over $R$ provided that for each finitely generated $R$-module $N$,
if $\Tor iMN=0$ for all $i\gg 0$, then $\pd_R(N)<\infty$.  

A key point of our work with O. Celikbas~\cite{celikbas:tgp} is to prove that if $M$ is $\pd$-test over $R$, 
then the $\m$-adic completion $\comp M$ is $\pd$-test over $\comp R$. (The converse is relatively straightforward to prove.)
This is accomplished by considering the more general notion of $\pd$-test \emph{complexes}.
This level of generality allows us to overcome certain technical difficulties involved in considering this ascent problem. 
However, it did not answer the following.

\begin{question}\label{q191102a}
If $\vf$ is flat with regular closed fibre $S/\m S$ and
$M$ is a $\pd$-test $R$-module, must $\Otimes SM$ be $\pd$-test over $S$?
\end{question}

It is straightforward to show that the assumption on the closed fibre in this question is necessary: if $R=k$ and $S=k[X]/\langle X^2\rangle$,
then $k$ is $\pd$-test over $k$, but $S=\Otimes[k]Sk$ is not $\pd$-test over $S$ since $S$ is not regular.

In the current paper, we answer Question~\ref{q191102a} in the case where the induced residue field extension $k\to l$ is algebraic;
see Theorem~\ref{thm140915a}. Moreover, the proof shows that the general case reduces to the special case of a purely transcendental field extension with finite transcendence degree.

\

\noindent\textbf{Theorem~\ref{thm140915a} (module case).}
\emph{Assume that $\vf$ is flat with regular closed fibre, and let $M$ be a finitely generated $R$-module.
Assume the induced field extension $k \to l$ is algebraic.
Then $M$ is a  $\pd$-test module over $R$ if and only if $\Otimes SM$ is a $\pd$-test module over $S$.}

\

Our proof of Theorem~\ref{thm140915a} uses two more significant expansions of our original context. 
First, we consider the more general setting of local ring homomorphisms of finite flat dimension. 
This perspective was pioneered by Avramov and Foxby in a sequence of 
articles~\cite{avramov:glh,avramov:lgh,avramov:glp,avramov:cmporh,avramov:dalp,avramov:solh,avramov:bsolrhoffd} 
where they 
used this notion to answer open questions about flat local ring homomorphisms.
Section~\ref{sec150429a} consists of foundational material about test modules and homomorphisms of finite flat dimension.

Second, we expand our perspective from modules and complexes over local rings to the setting of 
differential graded (DG) modules over commutative DG algebras.
This point of view was developed by Avramov and his 
collaborators; see, e.g.,~\cite{avramov:ifr,avramov:cslrec3, avramov:glh, avramov:lgh,avramov:dgha,avramov:dalp,avramov:bsolrhoffd,avramov:tlg,avramov:holh,avramov:htecdga, avramov:phcnr,avramov:psmlrsec}.
It provides a construction whereby if one can solve a homological commutative algebra problem for finite dimensional algebras over a field, then one can sometimes 
solve the general problem by passing to an associated finite dimensional DG algebra over a field where one can solve a related problem.
We have used this approach in several projects~\cite{altmann:csc,beck:sgidgca,christensen:dvke,nasseh:ldgm,nasseh:egdgm,nasseh:gart}.
For the current setting, we develop some technology around $\pd$-test DG modules over finite dimensional DG algebras in Section~\ref{sec190908c}
and then use it to prove Theorem~\ref{thm140915a}.

To be clear, we consider these generalizations because we do not know how to prove Theorem~\ref{thm140915a} without them.
In addition, our techniques allow us to answer similar ascent questions for  objects that test for finiteness of G-dimensions.
This is the subject of Section~\ref{sec190913a}.
In addition, we include some necessary background material in Section~\ref{sec140831}.

\section{Derived Categories and Semidualizing Complexes}
\label{sec140831}

Throughout this paper we work in the derived category $\catd(R)$ whose objects are the chain complexes of $R$-modules, i.e., the
$R$-complexes 
$$X=\qquad\cdots\xra{\partial^X_{i+1}} X_i\xra{\partial^X_{i}} X_{i-1}\xra{\partial^X_{i-1}}\cdots.$$
References for this include~\cite{christensen:dcmca,hartshorne:rad, verdier:cd, verdier:1}. 
We denote by $\Rhom XY$ and $\Lotimes XY$  the derived Hom-complex and derived tensor product of two $R$-complexes $X$ and $Y$.
Isomorphisms in $\catd(R)$ are identified by the symbol $\simeq$.
A complex $X\in\catd(R)$ has \emph{finite projective dimension}, signified $\pd_R(X)<\infty$, if it is isomorphic to a bounded complex of projective $R$-modules.
Finiteness of \emph{flat dimension} and \emph{injective dimension} are signified and defined similarly. 
The ring homomorphism $R\xra\vf S$ has \emph{finite flat dimension} provided that $\fd_R(S)<\infty$.

The subcategory of $\catd(R)$ consisting of homologically bounded $R$-complexes
(i.e., complexes $X$ such that $\HH_i(X)=0$ for $|i|\gg 0$) is denoted $\catdb(R)$.
The subcategory of $\catd(R)$ consisting of homologically finite $R$-complexes
(i.e., complexes $X$ such that $\HH(X):=\oplus_{i\in\bbz}\HH_i(X)$ is finitely generated) is denoted $\catdfb(R)$.

A complex $C\in\catdfb(R)$ is \emph{semidualzing} if the natural 
homothety morphism $R\to\Rhom CC$ in $\catd(R)$ is an isomorphism.
Consequently, an $R$-module is semidualizing if and only if $\Hom CC\cong R$ and $\Ext iCC=0$ for $i\geq 1$.
In particular, $R$ is a semidualizing $R$-module.
A \emph{dualizing $R$-complex} is a semidualizing $R$-complex of finite injective dimension.
Dualizing complexes were introduced in~\cite{hartshorne:rad}.
The more general semidualizing complexes were defined in~\cite{christensen:scatac},
based on special cases in~\cite{avramov:rhafgd, foxby:gmarm, golod:gdagpi, vasconcelos:dtmc}.

\begin{fact}\label{ch150423c1}
If $R$ is a homomorphic image of a local Gorenstein ring $Q$, then $R$ has a dualizing complex, by~\cite[\S V.10]{hartshorne:rad}.
(The converse holds by~\cite[Corollary~6.2]{kawasaki:mns}.)
In particular, the Cohen Structure Theorem shows that the completion $\comp R$ has a dualizing complex.
When $R$ has a dualizing complex $D$, and $C$ is a semidualizing $R$-complex, the dual $\Rhom CD$ is also semidualizing over $R$, by~\cite[(2.12)~Corollary]{christensen:scatac}.
\end{fact}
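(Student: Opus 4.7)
The first two sentences reproduce results from the cited references, so my plan focuses on the two constructions I would carry out from scratch: a dualizing complex over $R$ in the Hartshorne setting, and the semidualizing property of $\Rhom{C}{D}$.

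Suppose $R \cong Q/I$ for a local Gorenstein ring $Q$. Since $Q$ is semidualizing over itself and has finite self-injective dimension, $Q$ is a dualizing complex over $Q$; the plan is to show that $D := \Rhom[Q]{R}{Q}$ is a dualizing complex over $R$. Homological boundedness and finite generation of $\HH(D)$ follow because $\ext^i_Q(R,Q) = 0$ for $i$ larger than the (finite) injective dimension of $Q$, and each such $\ext$ is finitely generated over $R$. Finite injective dimension over $R$ follows by applying $\Hom[Q]{R}{-}$ to a finite injective resolution of $Q$, since $\Hom[Q]{R}{J}$ is $R$-injective whenever $J$ is $Q$-injective. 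The homothety map is an isomorphism because of the standard change-of-rings adjunction $\Rhom{X}{\Rhom[Q]{R}{Q}} \simeq \Rhom[Q]{X}{Q}$ combined with biduality over the Gorenstein ring $Q$:
\[
\Rhom{D}{D} \simeq \Rhom[Q]{D}{Q} \simeq \Rhom[Q]{\Rhom[Q]{R}{Q}}{Q} \simeq R.
\]
The statement about $\comp R$ is then immediate: by the Cohen Structure Theorem, $\comp R$ is a quotient of a complete regular (hence Gorenstein) local ring.

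For the semidualizing claim, set $\da{C} := \Rhom{C}{D}$. First I verify $\da{C} \in \catdfb(R)$: homological boundedness uses $\injdim_R(D) < \infty$ together with $C \in \catdfb(R)$, and finite generation of each homology module is a standard noetherian finiteness property of $\Rhom{C}{D}$ for $C, D \in \catdfb(R)$ with $\injdim_R(D) < \infty$. Next, for the homothety $R \to \Rhom{\da{C}}{\da{C}}$, I plan to compose the chain of natural isomorphisms
\[
\Rhom{\da{C}}{\da{C}} \simeq \Rhom{\Lotimes{\da{C}}{C}}{D} \simeq \Rhom{C}{\Rhom{\da{C}}{D}} \simeq \Rhom{C}{C} \simeq R.
\]
The first two steps use Hom-tensor adjunction together with the symmetry of the derived tensor product; the third uses biduality against the dualizing complex $D$, namely $\Rhom{\Rhom{C}{D}}{D} \simeq C$, which holds for every object of $\catdfb(R)$ because $D$ is dualizing; the last is the semidualizing property of $C$.

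The main obstacle I expect is verifying that this composite of abstract isomorphisms is induced by the homothety morphism, so that the latter itself is an isomorphism (rather than merely yielding an abstract identification of the target with $R$). This naturality bookkeeping I would organize along the lines of~\cite{christensen:scatac}.
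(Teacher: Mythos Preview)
The paper does not actually prove this statement: it is recorded as a \emph{Fact} with citations to~\cite{hartshorne:rad}, \cite{kawasaki:mns}, and~\cite{christensen:scatac}, and no argument is given in the paper itself. Your proposal supplies the standard proofs underlying those citations, and the arguments are correct. The construction $D=\Rhom[Q]{R}{Q}$ with the verification of homological finiteness, finite injective dimension, and the homothety isomorphism via adjunction is exactly Hartshorne's argument; the chain
\[
\Rhom{\da C}{\da C}\;\simeq\;\Rhom{\Lotimes{\da C}{C}}{D}\;\simeq\;\Rhom{C}{\Rhom{\da C}{D}}\;\simeq\;\Rhom{C}{C}\;\simeq\;R
\]
is essentially Christensen's proof of~\cite[(2.12)]{christensen:scatac}. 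Your caution about checking that the composite is the actual homothety morphism (rather than just an abstract isomorphism) is well placed and is precisely the naturality verification carried out in that reference.
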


\begin{fact}\label{ch150423c2}
Assume that $\vf$ has finite flat dimension,
and let $C$ be a semidualizing $R$-complex.
Then the $S$-complex $\Lotimes SC$ is semidualizing, by~\cite[(5.10)~Theorem]{christensen:scatac}.
If $\vf$ is flat and the closed fibre $S/\m S$ is Gorenstein and $R$ has a dualizing complex $D^R$, then
$D^S:=\Lotimes S{D^R}$ is dualizing for $S$ by~\cite[(5.1)~Theorem]{avramov:lgh}.
\end{fact}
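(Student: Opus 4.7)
The plan is to verify the defining properties of semidualizing and dualizing complexes in turn.

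For the first assertion, I would check (a) that $\Lotimes SC \in \catdfb(S)$ and (b) that the homothety morphism $S \to \Rhom[S]{\Lotimes SC}{\Lotimes SC}$ is an isomorphism in $\catd(S)$. For (a), boundedness of $\Lotimes SC$ follows from $\fd_R(S) < \infty$ (replace $S$ by a bounded flat resolution over $R$), and homological finiteness follows since the homology of a bounded complex of finitely generated $S$-modules is finitely generated, and tensoring a finitely generated $R$-module with $S$ yields a finitely generated $S$-module. For (b), I would use the chain of natural isomorphisms
\begin{align*}
\Rhom[S]{\Lotimes SC}{\Lotimes SC}
&\simeq \Rhom[R]{C}{\Lotimes SC} \\
&\simeq \Lotimes S{\Rhom[R]{C}{C}} \\
&\simeq \Lotimes SR \\
&\simeq S,
\end{align*}
where the first step is the derived tensor-hom adjunction, the second is the tensor-evaluation morphism (an isomorphism because $C \in \catdfb(R)$ and $\fd_R(S) < \infty$), the third uses the semidualizing property of $C$ over $R$, and the fourth is trivial. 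Naturality of the constituent isomorphisms guarantees that the composite is induced by the homothety on $S$.

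For the second assertion, the first part shows $D^S$ is semidualizing over $S$, so the task reduces to verifying $\id_S(D^S) < \infty$. Since $\id_R(D^R) < \infty$, I would represent $D^R$ by a bounded complex $I$ of injective $R$-modules; then flatness of $\vf$ gives $D^S \simeq \Otimes SI$ at the level of complexes, and it suffices to show $\Otimes SE$ has finite injective dimension over $S$ for each injective $R$-module $E$. Ascent of finite injective dimension through a flat local homomorphism with Gorenstein closed fibre is then the core issue. I would follow the Bass-number route: under the hypotheses, one computes that $\bass[S]{j}{\Otimes SE}$ vanishes outside a finite range by combining finiteness of the range in which $\bass[R]{i}{E}$ is nonzero with the Gorenstein closed fibre condition, which ensures that the fibre contributions entering the change-of-rings formula for Ext remain bounded.

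The main obstacle is the Bass-number analysis in the second part: the Gorensteinness of $S/\m S$ is crucial, and controlling how injective dimension interacts with flat base change under this hypothesis is substantive, constituting the technical core of the referenced Avramov-Foxby ascent result. The first assertion, by contrast, is essentially a formal manipulation once tensor-evaluation is available in the prescribed finiteness regime.
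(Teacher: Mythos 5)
The paper gives no argument here: this is a Fact quoted with citations, so you are in effect reconstructing the proofs of the cited theorems. Your treatment of the first assertion is correct and is exactly the standard proof of Christensen's base-change theorem: homological finiteness and boundedness of $\Lotimes SC$ follow from $C\in\catdfb(R)$ and $\fd_R(S)<\infty$, and the chain adjunction--tensor-evaluation--homothety does identify $\Rhom[S]{\Lotimes SC}{\Lotimes SC}$ with $S$; the only thing left implicit is the (routine but necessary) diagram chase showing the composite is the homothety $\chi^S_{\Lotimes SC}$.

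For the second assertion there is a genuine gap in your reduction. After replacing $D^R$ by a bounded injective resolution $I$, you reduce to showing $\id_S(\Otimes SE)<\infty$ for each term $E$ of $I$. But the minimal injective resolution of a dualizing complex contains $E_R(R/\p)$ for \emph{every} prime $\p$, and the injective dimension of $\Otimes S{E_R(R/\p)}$ is governed by the fibres of $\vf$ over $\p$, not by the closed fibre: the change-of-rings formula for the Bass number at a prime $\q\subset S$ lying over $\p$ involves the Bass numbers of the fibre ring $(\Otimes S{\kappa(\p)})_{\q}$. So your argument implicitly requires \emph{all} fibres of $\vf$ to be Gorenstein. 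That is true under the closed-fibre hypothesis, but it is itself the main theorem of the cited Avramov--Foxby paper (a case of Grothendieck's localization problem), not a formal consequence of bounding ``fibre contributions'' at the closed point. Note also that if you only track Bass numbers at $\n$, they cannot certify finite injective dimension of the non-finitely-generated module $\Otimes SE$ (e.g.\ any $R_\p$-module with $\p\neq\m$ has all Bass numbers at $\m$ equal to zero). The way to close the argument using only the closed fibre is to bypass the termwise reduction and work directly with the homologically finite $S$-complex $D^S=\Lotimes S{D^R}$: for such complexes $\id_S(D^S)<\infty$ if and only if $\Ext[S]{n}{l}{D^S}=0$ for $n\gg 0$, and the flat base-change formula
$$\bass[S]{n}{D^S}=\sum_{i+j=n}\bass{i}{D^R}\,\mu^j_{S/\m S}(S/\m S)$$
finishes the proof, since $\id_R(D^R)<\infty$ bounds the first factor and Gorensteinness of $S/\m S$ bounds the second.
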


In order to describe another class of ring homomorphisms where dualizing complexes base change to dualizing complexes, 
we need the following. 
A \emph{Cohen factorization} of $\vf$ is a 
decomposition of $\vf$ as a composition of local ring homomorphisms
$R\xra{\dot\vf}R'\xra{\vf'}S$ such that $\dot\vf$ is flat with regular closed fibre, $R'$ is complete, and $\vf'$ is surjective. 
Such a decomposition exists if and only if $S$ is complete by~\cite[(1.1)~Theorem]{avramov:solh}.
In particular, the \emph{semicompletion} of $\vf$, which is the composition $\grave\vf$ of $\vf$ with the natural map
from $S$ to its completion $\comp S$, has a Cohen factorization.

One idea with Cohen factorizations is that it allows one to transfer questions and properties for $S$ as an $R$-module
to related questions and properties for $\comp S$ has an $R'$-module. This is valuable because $\comp S$ is finitely generated over $R'$
while $S$ often is not finitely generated over $R$. An example of this transfer is found in~\cite[(3.2)~Lemma]{avramov:solh} which implies that
$\fd(\vf)<\infty$ if and only if $\pd_{R'}(\comp S)<\infty$. 

Recall that an ideal $I$ of $R$ is \emph{Gorenstein} if $g=\pd_R(R/I)<\infty$ satisfies
$\Ext{i}{R/I}R=0$ for all $i\neq g$ and $\Ext g{R/I}R\cong R/I$. 
Our  local ring homomorphism $\vf$ is \emph{Gorenstein} provided that
in some (equivalently, every) Cohen factorization $R\xra{\dot\vf}R'\xra{\vf'}\comp S$ of $\grave\vf$ the ideal $\ker(\vf')$ of $R'$ is Gorenstein;
see~\cite{avramov:glh,avramov:lgh}.

\begin{fact}\label{para190908b}
If $\vf$ is flat, then $\vf$ is Gorenstein if and only if its closed fibre is Gorenstein.
In general, if $\vf$ is Gorenstein and $R$ has a dualizing complex $D^R$, then
$D^S:=\Lotimes S{D^R}$ is dualizing for $S$ by~\cite[(5.1)~Theorem]{avramov:lgh}.
\end{fact}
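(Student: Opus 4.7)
The plan is to pass to the semicompletion $\grave\vf : R \to \comp S$ and work in a Cohen factorization $R \xra{\dot\vf} R' \xra{\vf'} \comp S$. This reduction is legitimate because, by definition, $\vf$ is Gorenstein iff $\ker(\vf')$ is a Gorenstein ideal of $R'$, and the closed fibre of $\grave\vf$ is $\comp S/\m\comp S \cong \comp{S/\m S}$, which is Gorenstein iff $S/\m S$ is. The two assertions then decouple: the first is a characterization of the Gorenstein property in the flat case, the second is an ascent result for dualizing complexes.

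For the first assertion, assume $\vf$ is flat, so $\grave\vf$ is flat and hence $\pd_{R'}(\comp S) < \infty$ by the result of Avramov--Foxby cited after Fact~\ref{ch150423c2}. Modding out by $\m$, the kernel $\ker(\vf')$ descends to an ideal of the regular local ring $R'/\m R'$ whose quotient is $\comp S/\m\comp S$. Since the ambient ring is regular, the image ideal is Gorenstein iff $\comp S/\m\comp S$ is itself a Gorenstein ring. The heart of the matter is therefore the equivalence between $\ker(\vf')$ being Gorenstein in $R'$ and its image being Gorenstein in $R'/\m R'$. I would establish this by tracking a minimal $R'$-free resolution $F$ of $\comp S$ of length $g := \pd_{R'}(\comp S)$: flatness of $R \to R'$ combined with the fact that $R'/\m R'$ is the fibre over the residue field ensures that $F \otimes_{R'} R'/\m R'$ is a minimal free resolution of $\comp S/\m\comp S$ over $R'/\m R'$; one then compares Ext-modules via $\Ext[R']{g}{\comp S}{R'} \otimes_{R'} R'/\m R' \cong \Ext[R'/\m R']{g}{\comp S/\m\comp S}{R'/\m R'}$ and invokes Nakayama's lemma to lift the self-duality isomorphism modulo $\m$ back to $R'$.

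For the second assertion, assume $\vf$ is Gorenstein and $D^R$ is dualizing for $R$. By Fact~\ref{ch150423c2}, $D^S := \Lotimes S{D^R}$ is semidualizing over $S$, so it remains to show $\id_S(D^S) < \infty$. Faithful flatness of $S \to \comp S$ and the standard behavior of injective dimension along faithfully flat extensions for bounded complexes with finitely generated homology reduce this to showing that $\Lotimes{\comp S}{D^R}$ has finite injective dimension over $\comp S$. Take a Cohen factorization of $\grave\vf$ as above. Since $\dot\vf$ is flat with regular (hence Gorenstein) closed fibre, Fact~\ref{ch150423c2} gives that $D^{R'} := \Lotimes{R'}{D^R}$ is dualizing for $R'$, and by associativity of derived tensor product, $\Lotimes{\comp S}{D^R} \simeq \Lotimes[R']{\comp S}{D^{R'}}$. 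Since $\ker(\vf')$ is Gorenstein of grade $g$, we have $\Rhom[R']{\comp S}{R'} \simeq \shift^{-g}\comp S$. The tensor-evaluation isomorphism, valid because $\pd_{R'}(\comp S) < \infty$, then yields
\[
\Rhom[R']{\comp S}{D^{R'}} \simeq \Lotimes[R']{\Rhom[R']{\comp S}{R'}}{D^{R'}} \simeq \shift^{-g}\Lotimes[R']{\comp S}{D^{R'}},
\]
so $\Lotimes[R']{\comp S}{D^{R'}}$ inherits finite injective dimension over $\comp S$ from $D^{R'}$ over $R'$, as needed.

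The main obstacle is the base-change equivalence for Gorenstein ideals underlying Part~1: one must carefully justify that passage to the regular closed fibre $R'/\m R'$ detects the Gorenstein property of $\ker(\vf')$ faithfully in both directions. This requires a delicate interplay between the flatness of $R \to R'$, the minimality of free resolutions of $\comp S$ over $R'$, and the behavior of $\Ext$ under the base change $R' \to R'/\m R'$, all of which are facilitated by the finiteness $\pd_{R'}(\comp S) < \infty$.
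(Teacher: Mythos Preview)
The paper does not prove this statement at all: it is recorded as a \emph{Fact} and attributed to Avramov--Foxby~\cite{avramov:lgh}, with no argument given. So your proposal is not comparable to any proof in the paper; you are reconstructing a proof of a cited result.

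Your argument for the second assertion is correct. The chain of reductions (pass to $\comp S$, Cohen-factor, use that $D^{R'}=\Lotimes{R'}{D^R}$ is dualizing by Fact~\ref{ch150423c2}, apply tensor-evaluation for the perfect $R'$-complex $\comp S$, and use that $\Hom[R']{\comp S}{-}$ carries injective $R'$-modules to injective $\comp S$-modules) is exactly the standard route.

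Your argument for the first assertion has a genuine gap in the direction ``closed fibre Gorenstein $\Rightarrow$ $\vf$ Gorenstein.'' You correctly observe that flatness of $\comp S$ over $R$ forces $F\otimes_{R'}R'/\m R'$ to be a minimal free resolution of $\comp S/\m\comp S$, so the projective dimensions match. You also correctly note the top-degree base-change isomorphism $\Ext[R']{g}{\comp S}{R'}\otimes_{R'}R'/\m R'\cong\Ext[R'/\m R']{g}{\comp S/\m\comp S}{R'/\m R'}$, since this Ext is a cokernel. But the sentence ``invoke Nakayama's lemma to lift the self-duality isomorphism modulo $\m$ back to $R'$'' does not do the job. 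Two issues remain: first, you never address the vanishing $\Ext[R']{i}{\comp S}{R'}=0$ for $i<g$ (i.e., perfection of $\comp S$ over $R'$), and base-change for these lower Ext groups does not follow from right-exactness; second, even in top degree, Nakayama only yields a surjection $\comp S\twoheadrightarrow\Ext[R']{g}{\comp S}{R'}$, not an isomorphism. Both points can be handled---for instance via the derived base-change $\Rhom[R']{\comp S}{R'}\otimes^{\mathbf L}_R k\simeq\Rhom[R'/\m R']{\comp S/\m\comp S}{R'/\m R'}$ together with a flatness-over-$R$ argument for the Ext modules, or by the double-duality for perfect modules---but this requires substantially more than a bare appeal to Nakayama. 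Your own closing paragraph flags exactly this spot as the ``main obstacle,'' and rightly so: as written, the argument is incomplete there.
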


As the name suggests, semidualizing complexes are built for duality.
Let $C$ be a semidualizing $R$-complex and $X\in\catdfb(R)$. 
We say that $X$ is \emph{derived $C$-reflexive} and write  $\gcdim_R(X)<\infty$ when
$\Rhom XC\in\catdb(R)$ and the natural morphism $X\to\Rhom{\Rhom XC}C$ in $\catd(R)$ is an isomorphism.
In the case $C=R$, we write $\gdim_R(X)<\infty$ instead of $\gkdim R_R(X)<\infty$.
In the special case where $C$ is dualizing, this is from~\cite{hartshorne:rad},
while the case $C=R$ comes from~\cite{auslander:adgeteac,auslander:smt,yassemi:gd}.
The general situation is in~\cite{christensen:scatac}.

\begin{fact}\label{ch150423a1}
A semidualizing complex $C$ is dualizing if and only if every $R$-complex in $\catdfb(R)$ is derived $C$-reflexive, by~\cite[(8.4)~Proposition]{christensen:scatac}.
In particular, $R$ is Gorenstein if and only if every $R$-complex $X\in\catdfb(R)$ has $\gdim_R(X)<\infty$.
\end{fact}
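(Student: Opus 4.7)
The plan is to prove the biconditional in two directions, then deduce the Gorenstein statement by specializing to $C=R$.

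For the forward implication, assume $C$ is dualizing, i.e., $\id_R(C)<\infty$. I first verify that $\Rhom{-}{C}$ preserves $\catdfb(R)$: boundedness of $\Rhom{X}{C}$ follows from $\id_R(C)<\infty$ and $X\in\catdb(R)$, while finite generation of homology follows since $\HH(X)$ is finitely generated over the noetherian ring $R$ and one can compute $\Rhom{X}{C}$ via a bounded-above resolution of $X$ by finitely generated projectives against a bounded injective resolution of $C$. Consequently the biduality morphism $\delta_X\colon X\to\Rhom{\Rhom{X}{C}}{C}$ lives in $\catdfb(R)$. Both source and target are triangulated functors of $X$, and $\delta_R$ is the homothety isomorphism $R\xra{\simeq}\Rhom{C}{C}$ furnished by the semidualizing hypothesis, so $\delta_{R^n}$ is an iso for every $n$. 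A two-variable induction (on amplitude of $X$ combined with truncation triangles that peel off a top-degree finitely generated homology module) extends this to all of $\catdfb(R)$; this is essentially Hartshorne's ``way-out'' functors argument.

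For the reverse implication, apply the hypothesis to $X=k$: derived $C$-reflexivity forces $\Rhom{k}{C}\in\catdb(R)$, equivalently $\Ext{i}{k}{C}=0$ for $i\gg 0$. I then invoke the standard characterization that for $C\in\catdfb(R)$, finiteness of $\id_R(C)$ is equivalent to vanishing of $\Ext{i}{k}{C}$ for $i\gg 0$ (the complex-level extension of Bass's criterion). This yields $\id_R(C)<\infty$, so $C$ is dualizing. For the ``in particular'' assertion, take $C=R$, which is tautologically semidualizing over itself; the equivalence then specializes to say that $R$ is dualizing, i.e., $\id_R(R)<\infty$ (which is the definition of Gorenstein), if and only if every $X\in\catdfb(R)$ is derived $R$-reflexive, which is precisely $\gdim_R(X)<\infty$ by the definition recalled just before the fact.

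The main obstacle is extending the forward direction from the semidualizing property at $R$ itself to all of $\catdfb(R)$. The thick subcategory generated by $R$ inside $\catdfb(R)$ consists of perfect complexes, which is a proper subcategory when $R$ is non-regular, so finite free resolutions do not suffice and one genuinely needs the finite injective dimension of $C$ to run the biduality argument through an injective resolution and the accompanying ``way-out'' induction that recovers arbitrary finitely generated homology modules.
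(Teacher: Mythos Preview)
The paper does not prove this statement; it records it as a Fact with a citation to Christensen's paper (and, implicitly, to Hartshorne for the classical dualizing case). Your proposal goes further by actually supplying an argument, and the overall strategy is sound.

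The reverse direction (testing on $X=k$ and invoking the complex-level Bass criterion) and the Gorenstein specialization to $C=R$ are both clean and correct. For the forward direction, your way-out sketch is the right idea, but one step is underspecified: truncation triangles reduce you to finitely generated \emph{modules} $M$, and for these you still need $\delta_M$ to be an isomorphism. Since such $M$ need not be perfect, knowing only that $\delta_{R^n}$ is an isomorphism does not suffice, and your ``two-variable induction'' does not make the second variable explicit. The standard way to close this (which your final paragraph gestures at) is to use the finite injective dimension of $C$ to bound the amplitude of $\operatorname{cone}(\delta_M)$ uniformly for modules $M$, then take a partial free resolution of $M$ of length exceeding that bound; comparing $M$ with its high syzygy through the resulting perfect complex forces $\operatorname{cone}(\delta_M)\simeq\operatorname{cone}(\delta_K)[n]$ to vanish by a degree-shift argument. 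Making this explicit would remove the only soft spot in your write-up.
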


\begin{fact}\label{ch150423a3}
Assume that $\fd(\vf)<\infty$. 
If $X\in\catdfb(R)$, then
$\gcdim_R(X)<\infty$ if and only if $\gkdim{\Lotimes SC}_S(\Lotimes SX)<\infty$, by~\cite[(5.10)~Theorem]{christensen:scatac};  Fact~\ref{ch150423c2}
shows that this is reasonable.
\end{fact}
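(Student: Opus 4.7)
The plan is to reduce the biconditional to two ingredients: natural base-change identifications of the derived Hom-complexes, and a Nakayama-style descent along the local homomorphism $\vf$. Set $C':=\Lotimes SC$, a semidualizing $S$-complex by Fact~\ref{ch150423c2}. My first step is to produce the natural isomorphism
\[
\Lotimes S{\Rhom XC}\simeq\Rhom[S]{\Lotimes SX}{C'}
\]
in $\catd(S)$ by composing the $\vf$-adjunction isomorphism $\Rhom[S]{\Lotimes SX}{C'}\simeq\Rhom{X}{\Lotimes SC}$ with the tensor-evaluation morphism $\Lotimes S{\Rhom XC}\to\Rhom{X}{\Lotimes SC}$. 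The tensor-evaluation is invertible precisely because $X\in\catdfb(R)$ and $\fd_R(S)<\infty$. Iterating the same identification (the second iteration needs $\Rhom XC\in\catdfb(R)$) and a routine naturality check identify the $S$-biduality morphism for $\Lotimes SX$ with respect to $C'$ with $\Lotimes S{\bidual X}$, where $\bidual X\colon X\to\Rhom{\Rhom XC}{C}$ is the $R$-biduality morphism.

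The forward direction is then immediate: if $\Rhom XC\in\catdb(R)$ and $\bidual X$ is an isomorphism, then applying $\Lotimes S{(-)}$ sends these, via the identifications above, to the corresponding $S$-side statements. For the reverse direction I would assume $\gkdim{C'}_S(\Lotimes SX)<\infty$. The first identification gives $\Lotimes S{\Rhom XC}\in\catdb(S)$, and the first task is to descend this to $\Rhom XC\in\catdb(R)$. Since $\Rhom XC$ has finitely generated homology modules, I would argue by picking a bounded flat resolution of $S$ over $R$ and analyzing the associated spectral sequence, exploiting the local Nakayama principle that $\Otimes SM\ne 0$ whenever $0\ne M$ is a finitely generated $R$-module (because $\m S\subseteq\n$). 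Once $\Rhom XC\in\catdfb(R)$, the second iteration of the base-change identification applies, so $\Lotimes S{\bidual X}$ is an isomorphism; applying the same descent principle to $\cone(\bidual X)$, which is homologically finite over $R$ once both $X$ and $\Rhom{\Rhom XC}{C}$ are, forces $\bidual X$ itself to be an isomorphism in $\catd(R)$.

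The main obstacle I anticipate is the first descent step, since $\Rhom XC$ is known a priori only to have finitely generated homology modules and to be bounded above. The argument has to trap the extremal nonzero homology of $\Rhom XC$ in a range controlled by $\fd_R(S)$ via the spectral sequence of the flat resolution, and then invoke $\m S\subseteq\n$ to guarantee that this extremal term cannot be annihilated by higher differentials. This is the step that requires both $\fd(\vf)<\infty$ and the locality of $\vf$ in a coordinated way; either hypothesis alone would be insufficient.
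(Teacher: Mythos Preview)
The paper does not supply its own proof of this statement: it is recorded as a Fact with a direct citation to \cite[(5.10)~Theorem]{christensen:scatac}, so there is nothing in-paper to compare your argument against beyond that reference.

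Your outline is the standard direct approach and is essentially correct. The tensor-evaluation isomorphism you invoke is valid under the hypotheses $X\in\catdfb(R)$ and $\fd_R(S)<\infty$, and the identification of $\delta^{\Lotimes SX}_{C'}$ with $\Lotimes S{\bidual X}$ is a routine naturality check once you have the first isomorphism and its iterate. The forward implication then follows immediately, and for the converse your reduction to the faithfulness of $\Lotimes S{(-)}$ on complexes with degreewise finitely generated homology is the right idea. One refinement worth noting for the ``main obstacle'' you flag: rather than wrestling with cancellation of edge terms in the flat-resolution spectral sequence, it is cleaner to use smart truncations. For each $n$, the triangle $\tau_{\geq n}(\Rhom XC)\to\Rhom XC\to\tau_{<n}(\Rhom XC)\to$ has a bounded first term, so the Nakayama argument gives $\inf\bigl(\Lotimes S{\tau_{\geq n}(\Rhom XC)}\bigr)=\inf\bigl(\tau_{\geq n}(\Rhom XC)\bigr)$; since $\Lotimes S{(-)}$ does not increase $\sup$, the third term contributes nothing in that degree, and you read off $\inf(\Lotimes S{\Rhom XC})\leq\inf(\Rhom XC)$ directly. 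This avoids any delicacy about higher differentials. With that in hand, the cone argument you describe for descending the isomorphism $\bidual X$ goes through, since $\cone(\bidual X)\in\catdfb(R)$ once $\Rhom XC\in\catdfb(R)$.
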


Auslander and Bass classes, defined next, arrived in special cases in~\cite{avramov:rhafgd,foxby:gmarm},
again with the general case described in~\cite{christensen:scatac}.
Let $C$ be a semidualizing $R$-complex.
The \emph{Auslander class}  $\catac(R)$ consists of the $R$-complexes $X\in\catdb(R)$
such that $\Lotimes CX\in\catdb(R)$ and the natural  morphism $\gamma^C_X\colon X\to\Rhom C{\Lotimes CX}$ in $\catd(R)$ is an isomorphism.
The \emph{Bass class} $\catbc(R)$ consists of all the $R$-complexes $X\in\catdb(R)$
such that $\Rhom CX\in\catdb(R)$ and such that the natural evaluation morphism $\xi^C_X\colon\Lotimes C{\Rhom CX}\to X$ in $\catd(R)$ is an isomorphism.

\begin{fact}\label{ch150423b1}
When $R$ has a dualizing complex $D$, given an $R$-complex $X\in\catdfb(R)$, one has
$\gcdim_R(X)<\infty$ if and only if $X\in\cata_{\Rhom CD}(R)$, by~\cite[(4.7)~Theorem]{christensen:scatac}; this uses Facts~\ref{ch150423c1} and~\ref{ch150423a1}, which imply that
$\Rhom CD$ is semidualizing and $C\simeq\Rhom{\Rhom CD}D$.
\end{fact}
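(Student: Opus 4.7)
The plan is to prove this biconditional by a direct calculation using tensor-hom adjunction, with the key input being that $B := \Rhom{C}{D}$ is semidualizing and $C \simeq \Rhom{B}{D}$ (from Facts~\ref{ch150423c1} and~\ref{ch150423a1}), together with $D$-biduality $Y \simeq \Rhom{\Rhom{Y}{D}}{D}$ for every $Y \in \catdfb(R)$.

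The central observation is that substituting $C \simeq \Rhom{B}{D}$ into the swap adjunction yields the natural isomorphism
$$\Rhom{X}{C} \simeq \Rhom{X}{\Rhom{B}{D}} \simeq \Rhom{\Lotimes{B}{X}}{D}.$$
Since $D$ is dualizing, $\Rhom{X}{C} \in \catdb(R)$ if and only if $\Lotimes{B}{X} \in \catdb(R)$ (both are then in $\catdfb(R)$), and in that case $D$-biduality gives $\Lotimes{B}{X} \simeq \Rhom{\Rhom{X}{C}}{D}$. This equates the finiteness halves of the definitions of $\gcdim_R(X) < \infty$ and $X \in \cata_B(R)$. For the unit/biduality halves, substitute the previous display into $\Rhom{B}{\Lotimes{B}{X}}$ and apply the adjunction twice more:
$$\Rhom{B}{\Lotimes{B}{X}} \simeq \Rhom{B}{\Rhom{\Rhom{X}{C}}{D}} \simeq \Rhom{\Lotimes{B}{\Rhom{X}{C}}}{D} \simeq \Rhom{\Rhom{X}{C}}{\Rhom{B}{D}} \simeq \Rhom{\Rhom{X}{C}}{C}.$$
Under this chain of natural isomorphisms the Auslander-class unit $\gamma^B_X \colon X \to \Rhom{B}{\Lotimes{B}{X}}$ is identified with the $C$-biduality morphism $X \to \Rhom{\Rhom{X}{C}}{C}$, so one is an isomorphism if and only if the other is, producing both implications simultaneously.

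The hardest part of the proof will be verifying that this string of natural isomorphisms really identifies $\gamma^B_X$ with the biduality map, rather than merely matching their sources and targets. I would carry this out by choosing compatible resolutions (a semi-injective resolution of $D$, a semi-projective resolution of $C$, and the induced resolution of $B$) and chasing each adjunction unit and each $D$-biduality map at the chain level; the computation is mechanical but tedious, and is the essential bookkeeping content of the proof.
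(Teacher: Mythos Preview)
The paper does not prove this statement; it records it as a fact with a citation to \cite[(4.7)~Theorem]{christensen:scatac}. Your argument is correct and is essentially the standard proof one finds in that reference: the adjunction isomorphism $\Rhom{X}{C}\simeq\Rhom{\Lotimes{B}{X}}{D}$ (with $B=\Rhom{C}{D}$) matches the boundedness conditions via $D$-duality, and the naturality of the adjunction identifies the Auslander-class unit $\gamma^B_X$ with the $C$-biduality morphism. Your caveat about the final bookkeeping step is well placed---that identification of morphisms (not just objects) is exactly the content of the cited theorem.
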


\begin{fact}\label{ch150423b2}
Assume that $\fd(\vf)<\infty$, and let $X\in\catdb(S)$. Then
$X\in\catac(R)$ if and only if $X\in\cata_{\Lotimes SC}(S)$, by~\cite[(5.3.a)~Proposition and~(5.10)~Theorem]{christensen:scatac}.
\end{fact}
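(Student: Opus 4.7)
My plan is to reduce both defining conditions of the Auslander class $\cata_{\Lotimes SC}(S)$ to the corresponding conditions of $\catac(R)$ by means of standard associativity and Hom-tensor adjunction isomorphisms in the derived category, together with Fact~\ref{ch150423c2}, which guarantees that $C':=\Lotimes SC$ is indeed a semidualizing $S$-complex so that $\cata_{C'}(S)$ is defined.

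First I would observe that any $X\in\catdb(S)$ automatically satisfies $X\in\catdb(R)$ via restriction of scalars along $\vf$, since homology is unaffected by this restriction; so the ambient boundedness requirement for $\catac(R)$ is free. Next, associativity of the derived tensor product (using $X\simeq\Lotimes[S]{S}{X}$) produces a natural isomorphism $\Lotimes[S]{C'}{X}\simeq\Lotimes{C}{X}$ in $\catd(S)$, which immediately yields the equivalence of the boundedness conditions $\Lotimes[S]{C'}{X}\in\catdb(S)$ and $\Lotimes{C}{X}\in\catdb(R)$. Note that the hypothesis $\fd(\vf)<\infty$ is not directly needed for these first two reductions, but it is crucial via Fact~\ref{ch150423c2} to guarantee that $C'$ is semidualizing.

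Then I would use derived Hom-tensor adjunction to obtain a natural isomorphism $\Rhom[S]{C'}{Y}\simeq\Rhom{C}{Y}$ for every $Y\in\catd(S)$. Applied with $Y=\Lotimes[S]{C'}{X}$ and combined with the previous step, this produces a natural isomorphism
\[
\Rhom[S]{C'}{\Lotimes[S]{C'}{X}}\simeq\Rhom{C}{\Lotimes{C}{X}}
\]
in $\catd(S)$. It then remains to verify that under this identification the natural morphism $\gamma^{C'}_X$ defining membership in $\cata_{C'}(S)$ corresponds to the morphism $\gamma^{C}_X$ defining membership in $\catac(R)$. Both are built from units of the respective tensor-Hom adjunctions, and the base-change isomorphisms above respect unit formation, so the two morphisms coincide after identification; hence one is an isomorphism if and only if the other is.

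The main obstacle is the final bookkeeping: checking that the composite of associativity and adjunction isomorphisms carries $\gamma^{C'}_X$ to $\gamma^{C}_X$. This is a formal compatibility in the derived category, which I would handle either by passing to appropriate semi-flat (or semi-projective) resolutions of $C$ and $X$ and semi-injective resolutions of the target, computing both morphisms explicitly on these resolutions, or more conceptually by invoking the naturality of the unit of the tensor-Hom adjunction with respect to base change along $\vf$. Once this compatibility is in place, the equivalence $X\in\catac(R)\iff X\in\cata_{C'}(S)$ follows immediately.
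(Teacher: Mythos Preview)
Your proposal is correct and follows the standard line of argument. Note, however, that the paper does not supply its own proof of this fact: it is recorded as a citation to~\cite[(5.3.a)~Proposition and~(5.10)~Theorem]{christensen:scatac}, so there is no in-paper argument to compare against. Your sketch is essentially the proof one finds in that reference---the associativity isomorphism $\Lotimes[S]{(\Lotimes SC)}{X}\simeq\Lotimes CX$ and the adjunction isomorphism $\Rhom[S]{\Lotimes SC}{-}\simeq\Rhom C{-}$ on $\catd(S)$ reduce the three defining conditions of $\cata_{\Lotimes SC}(S)$ to those of $\catac(R)$, and naturality of the adjunction unit under base change identifies $\gamma^{\Lotimes SC}_X$ with $\gamma^C_X$---with~(5.10) invoked (via Fact~\ref{ch150423c2}) only to ensure $\Lotimes SC$ is semidualizing so that $\cata_{\Lotimes SC}(S)$ is defined.
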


The proof of the following  lemma is similar to that of~\cite[7.3~Corollary]{iyengar:golh},
but it is different enough that we include a proof.

\begin{lem}\label{lem150421a}
Assume that $\vf$ is Gorenstein, and let $X\in\catdfb(S)$ be such that each homology module $\HH_i(X)$ is finitely generated over $R$.
\begin{enumerate}[\rm(a)]
\item \label{lem150421a1}
One has $\gcdim_R(X)<\infty$ if and only if $\gkdim{\Lotimes SC}_S(X)<\infty$.
\item \label{lem150421a2}
One has $\gdim_R(X)<\infty$ if and only if $\gdim_S(X)<\infty$.
\end{enumerate}
\end{lem}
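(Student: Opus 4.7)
My plan is to translate both finiteness-of-G-dimension conditions in part (a) into Auslander class memberships using Fact~\ref{ch150423b1}, and then transfer those memberships across $\vf$ using Fact~\ref{ch150423b2}. Part (b) will then follow from part (a) by specializing $C=R$, since $\Lotimes SR\simeq S$ identifies $\gkdim{R}_R$ with $\gdim_R$ and $\gkdim{S}_S$ with $\gdim_S$.

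For this strategy, two preliminary points need to be addressed. First, I would observe that the Gorenstein hypothesis on $\vf$ forces $\fd(\vf)<\infty$: in any Cohen factorization $R\xra{\dot\vf}R'\xra{\vf'}\comp S$ of the semicompletion $\grave\vf$, the Gorenstein ideal $\ker(\vf')$ has $\pd_{R'}(\comp S)<\infty$ by definition, so flatness of $\dot\vf$ yields $\fd_R(\comp S)<\infty$, and faithful flatness of $S\to\comp S$ then gives $\fd_R(S)=\fd_R(\comp S)<\infty$. Second, I would reduce to the case where $R$ admits a dualizing complex, which is automatic after replacing $R$ and $S$ by their completions (by Fact~\ref{ch150423c1}). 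Here one checks that both G-dimension conditions are preserved under the flat maps $R\to\comp R$ and $S\to\comp S$ via Fact~\ref{ch150423a3}, that the Gorenstein property lifts to $\comp R\to\comp S$, and that the hypothesis on each $\HH_i(X)$ being finitely generated over $R$ survives --- the last point using that on any $S$-module which is finitely generated over $R$, the $\m$-adic and $\n$-adic topologies coincide.

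With these reductions in place, Fact~\ref{para190908b} gives that $D^S := \Lotimes S{D^R}$ is dualizing for $S$, and Fact~\ref{ch150423c1} gives that $B := \Rhom{C}{D^R}$ is semidualizing over $R$. A standard tensor-evaluation identity, valid because $C$ is homologically finite, $\injdim_R(D^R)<\infty$, and $\fd(\vf)<\infty$, then yields
$$\Rhom[S]{\Lotimes SC}{\Lotimes S{D^R}} \simeq \Lotimes S\Rhom{C}{D^R} = \Lotimes SB.$$
Fact~\ref{ch150423b1} applied over $R$ and over $S$ rewrites the two G-dimension conditions as $X\in\cata_B(R)$ and $X\in\cata_{\Lotimes SB}(S)$, respectively, and Fact~\ref{ch150423b2} applied to the semidualizing complex $B$ shows these two memberships are equivalent, completing part (a).

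The hard part will be the reduction to the complete case, and in particular the bookkeeping needed to verify that the hypothesis on finitely generated homology modules transfers correctly under completion (so that the completed data still satisfies the hypotheses of the lemma). The tensor-evaluation identity and the verification that $\fd(\vf)<\infty$ are routine and will not require separate discussion.
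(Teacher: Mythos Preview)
Your proposal is correct. The core argument—translating both $\gcdim$ conditions into Auslander-class memberships via Fact~\ref{ch150423b1} and transferring across $\vf$ via Fact~\ref{ch150423b2}—is exactly the paper's Case~1. The difference lies in the reduction to the dualizing-complex setting. The paper proceeds in two steps: first it tensors with the Koszul complex $K^R$ on generators of $\m$, using \cite[Theorem~4.4]{frankild:rrhffd} to preserve both $\gcdim$ conditions, thereby reducing to the case $\m\HH(X)=0$; then it passes to the $\m$-adic completions of $R$ and of $S$ (the latter denoted $\wti S$, which is \emph{not} the $\n$-adic completion), invoking an isomorphism $\Lotimes{\comp R}X\simeq\Lotimes[S]{\wti S}X$ from \cite[7.1~Theorem]{iyengar:golh} that is available precisely because $\m$ kills the homology. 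Your route completes $R$ and $S$ at their own maximal ideals in a single pass, using that on each $\HH_i(X)$ the $\m$-adic and $\n$-adic topologies coincide to identify $\Lotimes{\comp R}X$ with $\Lotimes[S]{\comp S}X$ in $\catd(\comp R)$. This is more self-contained—no Koszul step, no appeal to~\cite{frankild:rrhffd} or~\cite{iyengar:golh}—at the cost of the completion bookkeeping you correctly flag. One point to make explicit when you write it up: the reduction requires not merely that the finitely-generated-homology hypothesis survives, but that the two completed complexes $\Lotimes{\comp R}X$ and $\Lotimes[S]{\comp S}X$ agree in $\catd(\comp R)$; your topology observation yields matching homology, and the natural morphism induced by $\comp R\to\comp S$ supplies the quasi-isomorphism.
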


\begin{proof}
\eqref{lem150421a1}
Case 1: $R$ has a dualizing compelx $D^R$. From Fact~\ref{para190908b}, we have $D^S\simeq\Lotimes{S}{D^R}$ is dualizing for $S$.
And Facts~\ref{ch150423b1}--\ref{ch150423b2} say that $\gcdim_R(X)<\infty$ if and only if $X\in\cata_{\Rhom C{D^R}}(R)$
if and only if $X\in\cata_{\Lotimes S{\Rhom C{D^R}}}(S)$.
Standard isomorphisms give 
$$\Lotimes S{\Rhom C{D^R}}\simeq \Rhom[S]{\Lotimes SC}{\Lotimes S{D^R}}\simeq \Rhom[S]{\Lotimes SC}{D^S}
$$
so $\gcdim_R(X)<\infty$ if and only if $X\in\cata_{\Rhom[S]{\Lotimes SC}{D^S}}(S)$;
Facts~\ref{ch150423a1} and~\ref{ch150423b1} show that these are equivalent to $\gkdim{\Lotimes SC}_S(X)<\infty$, as desired.

Case 2: we have $\m\HH(X)=0$.
In this case, let $\comp R$ and $\wti S$ denote the $\m$-adic completions of $R$ and $S$, respectively, and let $\wti\vf\colon\comp R\to\wti S$
be the $\m$-adic completion of $\vf$, which is Gorenstein. 
By Fact~\ref{ch150423a3} we have 
$\gcdim_R(X)<\infty$ if and only if $\gkdim{\Lotimes {\comp R}C}_{\comp R}(\Lotimes {\comp R}X)<\infty$.
As in the proof of~\cite[7.1~Theorem]{iyengar:golh} we have an $\comp R$-isomorphism
$\Lotimes {\comp R}X\simeq \Lotimes[S]{\wti S}X$, so   
$\gkdim{\Lotimes {\comp R}C}_{\comp R}(\Lotimes {\comp R}X)<\infty$
if and only if $\gkdim{\Lotimes[\comp R]{\wti S}{(\Lotimes {\comp R}C})}_{\wti S}(\Lotimes[S]{\wti S}X)<\infty$ by Case~1,
i.e.,  if and only if $\gkdim{\Lotimes{\wti S}{C}}_{\wti S}(\Lotimes[S]{\wti S}X)<\infty$.
Again by Fact~\ref{ch150423a3} we have 
$\gkdim{\Lotimes SC}_S(X)<\infty$ if and only if $\gkdim{\Lotimes[S]{\wti S}{(\Lotimes SC})}_{\wti S}(\Lotimes[S]{\wti S}X)<\infty$
if and only if $\gkdim{\Lotimes{\wti S}{C}}_{\wti S}(\Lotimes[S]{\wti S}X)<\infty$, as desired.

Case 3: the general case.
Let $K^R$ be the Koszul complex over $R$ on a finite generating set for $\m$.
Then 
$\gcdim_R(X)<\infty$ if and only if $\gcdim_R(\Lotimes{K^R}X)<\infty$ by~\cite[Theorem~4.4]{frankild:rrhffd},
that is, if and only if $\gcdim_R(\Lotimes[S]{(\Lotimes{K^R}S)}X)<\infty$.
Case~2 says that this is equivalent to $\gkdim{\Lotimes SC}_R(\Lotimes[S]{(\Lotimes{K^R}S)}X)<\infty$,
which is equivalent to $\gkdim{\Lotimes SC}_S(X)<\infty$ by another application of~\cite[Theorem~4.4]{frankild:rrhffd}.

\eqref{lem150421a2} This is the special case $C=R$ of part~\eqref{lem150421a1}.
\end{proof}

\section{Test Complexes and Ring  Homomorphisms of Finite Flat Dimension}
\label{sec150429a}

In this section, let $C$ be a semidualizing $R$-complex.

\

Let $M\in\catdfb(R)$, and let $\Hod$ denote either $\pd$ or $\gcdim$.
Then $M$ is an \emph{$\Hod$-test complex} over $R$ if the following condition holds for all  $N\in\catdfb(R)$:
If $\Tor iMN=0$ for all $i\gg 0$, i.e., if $\Lotimes MN\in\catdb(R)$, then $\Hod_R(N)<\infty$.  
See~\cite[Section~3]{celikbas:tgp} for examples and basic properties of these objects.

The following two results are proved  like~\cite[Theorems~3.2 and~3.4]{celikbas:tgp}, using 
Lemma~\ref{lem150421a} and Fact~\ref{ch150423a3}.

\begin{prop}\label{prop140825a}
Assume that $\fd(\vf)<\infty$, and let $M\in\catdfb(R)$.
\begin{enumerate}[\rm(a)]
\item\label{prop140825a2}
If $\Lotimes SM$ is $\gkdim{\Lotimes SC}$-test  over $S$, then $M$ is   $\gcdim$-test  over $R$.
\item\label{prop140825a2'}
If $\Lotimes SM$ is $\gdim$-test  over $S$, then $M$ is   $\gdim$-test  over $R$.
\item\label{prop140825a1}
If $\Lotimes SM$ is  $\pd$-test  over $S$, then $M$ is $\pd$-test  over $R$.
\end{enumerate}
\end{prop}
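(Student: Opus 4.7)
The plan is to prove all three parts by a single uniform template, mirroring the structure of the two results from \cite{celikbas:tgp} that the authors indicate are to be emulated. Fix $N \in \catdfb(R)$ with $\Lotimes MN \in \catdb(R)$; the goal in each case is to deduce finiteness of the appropriate homological dimension of $N$ over $R$ from the hypothesis that $\Lotimes SM$ is the corresponding test complex over $S$.

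The first step is to verify that the test hypothesis on $\Lotimes SM$ can be invoked at the base change $\Lotimes SN$. Since $N \in \catdfb(R)$ and $\fd(\vf) < \infty$, taking a resolution of $N$ by finitely generated free $R$-modules and base changing to $S$ shows that $\Lotimes SN \in \catdfb(S)$. By associativity of the derived tensor product,
\[
\Lotimes[S]{(\Lotimes SM)}{(\Lotimes SN)} \simeq \Lotimes S{(\Lotimes MN)},
\]
and the right-hand side lies in $\catdb(S)$, again because $\fd(\vf) < \infty$. So the test property of $\Lotimes SM$ applies to $\Lotimes SN$.

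Applying that property yields the conclusion of each respective test in turn: for part \eqref{prop140825a2}, $\gkdim{\Lotimes SC}_S(\Lotimes SN) < \infty$; for part \eqref{prop140825a2'}, which is the specialization $C = R$ of part \eqref{prop140825a2}, $\gdim_S(\Lotimes SN) < \infty$; and for part \eqref{prop140825a1}, $\pd_S(\Lotimes SN) < \infty$. Descent back to $R$ then closes each case: Fact~\ref{ch150423a3} (together with its $C=R$ specialization) handles \eqref{prop140825a2} and \eqref{prop140825a2'}, while the analogous Avramov--Foxby base change result, to the effect that finite projective dimension descends across homomorphisms of finite flat dimension for complexes in $\catdfb$, handles \eqref{prop140825a1}.

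The main subtlety is not the logical skeleton, which is essentially a functorial unpacking of the hypotheses, but securing that $\Lotimes SN$ lies in $\catdfb(S)$ and not merely in $\catdb(S)$, so that the test property over $S$ is applicable. This is exactly where the assumption $\fd(\vf) < \infty$ does its real work. Everything else is a direct application of Fact~\ref{ch150423a3} and its projective-dimension analogue.
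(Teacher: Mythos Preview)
Your proposal is correct and matches the paper's own approach: the paper simply says this result (together with the next one) is proved like \cite[Theorems~3.2 and~3.4]{celikbas:tgp} using Lemma~\ref{lem150421a} and Fact~\ref{ch150423a3}, and your argument is precisely the natural fleshing-out of that template, with Fact~\ref{ch150423a3} (and its $\pd$ analogue) supplying the descent step. One minor quibble: the degreewise finiteness of $\Lotimes SN$ over $S$ comes for free from a degreewise finite free resolution of $N$ and does not use $\fd(\vf)<\infty$; that hypothesis is needed only to ensure homological \emph{boundedness} of $\Lotimes SN$ and of $\Lotimes S{(\Lotimes MN)}$.
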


\begin{prop}\label{thm140825ax}
Assume that $\vf$ is Gorenstein, and let $M\in\catdfb(R)$.
Assume that the induced residue field extension $k\to l$ is finite.
\begin{enumerate}[\rm(a)]
\item \label{thm140825ax1}
$M$ is $\gcdim$-test over $R$ if and only if $\Lotimes SM$ is $\gkdim{\Lotimes SC}$-test  over $S$.
\item \label{thm140825ax2}
$M$ is  $\gdim$-test over $R$ if and only if $\Lotimes SM$ is   $\gdim$-test  over $S$.
\end{enumerate}
\end{prop}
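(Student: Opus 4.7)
The plan is to follow the pattern of~\cite[Theorems~3.2 and~3.4]{celikbas:tgp}, with part~(b) treated as the special case $C=R$ of part~(a); I focus on~(a). A preliminary observation available directly from the excerpt is that the Gorenstein hypothesis on $\vf$ already forces $\fd(\vf)<\infty$: in any Cohen factorization $R\xra{\dot\vf}R'\xra{\vf'}\comp S$ of the semicompletion $\grave\vf$, the ideal $\ker(\vf')$ is Gorenstein, so $\pd_{R'}(\comp S)<\infty$, and then~\cite[(3.2)~Lemma]{avramov:solh} (cited in the excerpt) gives $\fd(\vf)<\infty$. The ``if'' direction of~(a) is therefore immediate from Proposition~\ref{prop140825a}(a).

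For the ``only if'' direction, assume $M$ is $\gcdim$-test over $R$, and take $N\in\catdfb(S)$ with
$$\Lotimes[S]{(\Lotimes SM)}N\simeq\Lotimes MN\in\catdb(S);$$
the goal is $\gkdim{\Lotimes SC}_S(N)<\infty$. The obstruction is that $\HH(N)$ is only $S$-finite, while the test property of $M$ demands test objects with $R$-finite homology. To bridge this, I would introduce the Koszul complex $K=K^S(\y)$ on a finite generating set $\y$ for $\n$ and put $N':=\Lotimes[S]KN\in\catdfb(S)$. Because $\n\HH(N')=0$, the homology $\HH(N')$ is finitely generated over $S/\n=l$; finiteness of $k\to l$ then makes $\HH(N')$ finitely generated over $k$, and hence over $R$. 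Thus $N'$ is an object of $\catdfb(S)$ with $R$-finite homology --- exactly the hypothesis of Lemma~\ref{lem150421a}(a).

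To finish, the standard associativity isomorphism $\Lotimes M{N'}\simeq\Lotimes[S]K{(\Lotimes MN)}$ shows that $\Lotimes M{N'}$ is $S$-bounded, and hence $R$-bounded; the $\gcdim$-test hypothesis on $M$ therefore yields $\gcdim_R(N')<\infty$. Lemma~\ref{lem150421a}(a) then upgrades this to $\gkdim{\Lotimes SC}_S(N')<\infty$, and Koszul-invariance of the $\gkdim{\Lotimes SC}_S$-dimension, as in~\cite[Theorem~4.4]{frankild:rrhffd}, transports the conclusion back to $\gkdim{\Lotimes SC}_S(N)<\infty$, as required. The main obstacle is precisely the mismatch between $S$-finite and $R$-finite homology on the ``test input'' side; the Koszul reduction is the one genuinely non-formal move, and it is exactly where the finiteness of $k\to l$ enters (used once, to pass from $l$-finiteness to $k$-finiteness of $\HH(N')$).
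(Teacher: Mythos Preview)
Your proposal is correct and follows essentially the approach the paper intends: the paper only says that this result is ``proved like~\cite[Theorems~3.2 and~3.4]{celikbas:tgp}, using Lemma~\ref{lem150421a} and Fact~\ref{ch150423a3},'' and you have faithfully reconstructed that argument---the Koszul reduction to force $l$-finite (hence $R$-finite) homology, the application of the $\gcdim$-test hypothesis, the transfer via Lemma~\ref{lem150421a}, and the return to $N$ via~\cite[Theorem~4.4]{frankild:rrhffd}. Your preliminary remark that the Gorenstein hypothesis already gives $\fd(\vf)<\infty$ (so that Proposition~\ref{prop140825a} applies for the converse) is a point the paper leaves implicit but is indeed needed.
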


\begin{disc}\label{disc190914a}
Assume that $\vf$ is flat with regular closed fibre.
Let $\y=y_1,\ldots,y_n\in\n$ be a sequence that forms a regular system of parameters for the regular local ring $S/\m S$.
Then $\y$ is $S$-regular and that the composition $\tau\vf\colon R\to \ol S=S/(\y)$ of $\vf$ with the quotient map
$\tau\colon S\to \ol S$ is flat by, e.g.,~\cite[Corollary to Theorem~22.5]{matsumura:crt}. By construction, the closed fibre of $\tau\vf$ is $S/(\m,\y)=l$. 
\end{disc}

Here is one of our main results. One point is that one can prove better results about flat local maps by widening the context to finite flat dimension.
Note that the case where $\m S=\n$ is covered by~\cite[Theorem~3.5]{celikbas:tgp}.
As we note in~\cite[Example~3.6]{celikbas:tgp}, if $S/\m S$ is  not regular, then the ascent portion of this result fails.

\begin{thm}\label{thm140825a}
Assume that $\vf$ is flat with regular closed fibre, and let $M\in\catdfb(R)$.
Assume the induced field extension $k \to l$ is finite.
Then $M$ is a  $\pd$-test complex over $R$ if and only if $\Lotimes SM$ is a $\pd$-test complex over $S$.
\end{thm}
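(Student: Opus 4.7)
The plan is to handle the two implications separately. The descent direction---if $\Lotimes SM$ is $\pd$-test over $S$ then $M$ is $\pd$-test over $R$---is immediate from part~(c) of Proposition~\ref{prop140825a}, since $\vf$ flat forces $\fd(\vf)<\infty$.

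For the ascent direction, the strategy is to reduce to the already-known case $\m S=\n$ of~\cite[Theorem~3.5]{celikbas:tgp}. Using Remark~\ref{disc190914a}, I fix a regular system of parameters $\y=y_1,\ldots,y_n$ for the regular local ring $S/\m S$, lifted to $\n$; then $\y$ is $S$-regular, the composition $\tau\vf\colon R\to\ol S:=S/(\y)$ is flat, and its closed fibre is $\ol S/\m\ol S=l$. In particular $\m\ol S$ is the maximal ideal of $\ol S$, and the induced residue field extension for $\tau\vf$ is exactly the finite extension $k\to l$. The case $\m S=\n$ of~\cite[Theorem~3.5]{celikbas:tgp} therefore applies to $\tau\vf$ and delivers that $\Lotimes{\ol S}M$ is a $\pd$-test complex over $\ol S$.

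To complete the ascent, let $N\in\catdfb(S)$ satisfy $\Lotimes[S]{(\Lotimes SM)}N\in\catdb(S)$. Because $\ol S$ has finite flat dimension over $S$ (its Koszul resolution on $\y$ is bounded), the complex $\Lotimes[S]{\ol S}N$ lies in $\catdfb(\ol S)$, and associativity of derived tensor product yields
\[
\Lotimes[\ol S]{(\Lotimes{\ol S}M)}{(\Lotimes[S]{\ol S}N)}\simeq\Lotimes[S]{\ol S}{(\Lotimes[S]{(\Lotimes SM)}N)}\in\catdb(\ol S).
\]
The $\pd$-test property of $\Lotimes{\ol S}M$ over $\ol S$ then produces $\pd_{\ol S}(\Lotimes[S]{\ol S}N)<\infty$. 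Finally, because $l=\ol S/\m\ol S=S/\n$ is the common residue field of both rings, the isomorphism $\Lotimes[S]Nl\simeq(\Lotimes[S]{\ol S}N)\lotimes_{\ol S}l$ gives $\Tor[S]iNl\cong\Tor[\ol S]i{\Lotimes[S]{\ol S}N}l$, and these vanish for $i\gg0$. This forces $\fd_S(N)<\infty$, and homological finiteness of $N$ upgrades this to $\pd_S(N)<\infty$, as desired.

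The main obstacle here is conceptual rather than computational: spotting that the correct reduction is to kill a regular system of parameters of the closed fibre, so that the problem collapses to the case $\m S=\n$ already treated in~\cite{celikbas:tgp}. Once that reduction is identified, the remaining steps are routine manipulations with derived tensor products combined with the residue-field test for finite projective dimension.
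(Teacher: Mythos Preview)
Your proof is correct and follows essentially the same route as the paper: reduce via Remark~\ref{disc190914a} to the quotient $\ol S=S/(\y)$, invoke~\cite[Theorem~3.5]{celikbas:tgp} to get that $\Lotimes{\ol S}M$ is $\pd$-test over $\ol S$, and then descend along $S\to\ol S$. The only difference is cosmetic: for the last descent step the paper simply cites Proposition~\ref{prop140825a}\eqref{prop140825a1} applied to the finite-flat-dimension map $\tau\colon S\to\ol S$, whereas you unpack that descent by hand via the residue-field Tor computation.
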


\begin{proof}
One implication is covered by Proposition~\ref{prop140825a}\eqref{prop140825a1}.
For the reverse implication, assume that $M$ is a  $\pd$-test complex over $R$.
Follow the notation from Remark~\ref{disc190914a}.
From~\cite[Theorem~3.5]{celikbas:tgp}, it follows that $\Lotimes{\ol S}M=\Lotimes[S]{\ol S}{(\Lotimes SM)}$ is $\pd$-test over $\ol S$,
so Proposition~\ref{prop140825a}\eqref{prop140825a1} implies that $\Lotimes SM$ is  $\pd$-test  over $S$.
\end{proof}

\section{PD-Test Results} \label{sec190908c}

As we discuss in the introduction, the point of this section is to prove Theorem~\ref{thm140915a},
and the proof relies heavily on a version of pd-test objects for finite dimensional DG algebras. 
One may consult any of the following for background on DG algebras and derived categories of
DG modules~\cite{avramov:ifr,avramov:dgha,avramov:htecdga,beck:sgidgca,felix:rht}. 

\begin{defn} \label{defn190913a} 
We say that a (positively graded commutative) DG algebra $A$ is \emph{weakly local} if
$\HH_0(A)$ is local and noetherian and $\HH(A)$ is finitely generated as a module over $\HH_0(A)$.
In particular, $\HH(A)$ is bounded.
In this situation, let $\m_A$ be the augmentation ideal of $A$ corresponding to the maximal ideal $\m_{\HH_0(A)}\subset\HH_0(A)$, and
set $k=A/\m_A$. We sometimes summarize this by writing that $(A,\m_A,k)$ is a weakly local DG algebra.
\end{defn}

\begin{disc}\label{disc190913b}
If $k$ is a field and $A$ is a finite dimensional DG $k$-algebra such that $A_0=k$ and
$\HH_0(A)\neq 0$, then $A$ is weakly local with
$\m_A=A_+$ and $\partial^A_1=0$ and $A/\m_A\cong k\cong\HH_0(A)$.
\end{disc}

\begin{fact}\label{fact190913a}
Let $(A,\m_A,k)$ be a weakly local DG algebra.
For a given $N\in\catdfb(A)$, the following conditions are equivalent.
\begin{enumerate}[\rm(i)]
\item \label{fact190913a1}
$N$ has a semi-free resolution $F\simeq N$ over $A$ with a finite semi-basis.
\item \label{fact190913a2}
For all $L\in\catdb(A)$, one has $\Lotimes[A]NL\in\catdb(A)$.
\item \label{fact190913a3}
One has $\Lotimes[A]Nk\in\catdb(A)$.
\end{enumerate}
For the implication~\eqref{fact190913a1}$\implies$\eqref{fact190913a2}, replace $A$ with a bounded truncation and use $F$ to compute $\Lotimes[A]NL$.
The implication~\eqref{fact190913a3}$\implies$\eqref{fact190913a1} is in~\cite[Proposition~B.9]{avramov:htecdga}.
Note that in this situation, the resolution $F$ will be homologically bounded but not necessarily bounded, unless $A$ is bounded.
Furthermore, for the implication~\eqref{fact190913a1}$\implies$\eqref{fact190913a2}, it is crucial that $A$ be homologically bounded.
\end{fact}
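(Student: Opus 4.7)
The plan is to prove the cycle $(\text{i}) \Rightarrow (\text{ii}) \Rightarrow (\text{iii}) \Rightarrow (\text{i})$. The implication $(\text{ii}) \Rightarrow (\text{iii})$ is immediate because $k \in \catdfb(A) \subset \catdb(A)$, so it suffices to specialize (ii) to $L = k$.

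For $(\text{i}) \Rightarrow (\text{ii})$, I would exploit the homological boundedness of $A$ to reduce to a bounded model. Choose $n$ with $\HH_i(A) = 0$ for $i > n$ and pass to the smart truncation $A' := \tau_{\leq n} A$, which is a sub-DG-algebra of $A$ concentrated in finitely many degrees and for which the inclusion $A' \hookrightarrow A$ is a quasi-isomorphism. Restriction of scalars along this inclusion yields an equivalence $\catd(A) \simeq \catd(A')$ preserving homological boundedness. Transferring $F$ along this equivalence gives a semi-free resolution $F'$ of $N$ over $A'$ whose semi-basis is still finite; since $A'$ is bounded as a graded module, $F'$ is too. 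For any $L \in \catdb(A) \simeq \catdb(A')$, the derived tensor product $\Lotimes[A]NL$ is then represented by the honest tensor product $F' \otimes_{A'} L$, and the filtration of $F'$ by its finite semi-basis produces a finite spectral sequence whose $E_1$-page is a finite sum of shifts of $\HH(L)$; convergence then yields boundedness of $\HH(F' \otimes_{A'} L)$. The reason one passes to $A'$ is that over an unbounded $A$ a semi-free module on a finite semi-basis need not be bounded as a graded module, so the direct tensor product is less well-behaved.

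For $(\text{iii}) \Rightarrow (\text{i})$, I would invoke Proposition~B.9 of~\cite{avramov:htecdga} directly. The underlying idea is a DG incarnation of Nakayama's lemma: one constructs a minimal semi-free resolution $F \simeq N$ over $A$ in which the number of semi-basis elements in degree $i$ is governed by $\dim_k \HH_i(\Lotimes[A]Nk)$. The hypothesis that this tensor product is homologically bounded thus forces the semi-basis to live in finitely many degrees, and the finite generation of $\HH(N)$ over $\HH_0(A)$ then reduces each degree to finitely many elements, so the total semi-basis is finite.

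I expect $(\text{iii}) \Rightarrow (\text{i})$ to be the essential obstacle, which is why the author defers to an external reference: constructing a minimal semi-free resolution in the weakly local DG setting requires careful bookkeeping with the augmentation $A \to k$ and a graded Nakayama argument for bounding the basis, whereas $(\text{i}) \Rightarrow (\text{ii})$ is essentially a finiteness observation once one has reduced to a bounded DG algebra.
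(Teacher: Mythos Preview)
Your proposal is correct and matches the paper's own argument, which is the brief sketch embedded in the statement itself: truncate $A$ to a bounded DG algebra for (i)$\Rightarrow$(ii), and invoke \cite[Proposition~B.9]{avramov:htecdga} for (iii)$\Rightarrow$(i). One small correction: in the paper's homological convention (differential decreasing degree) the smart truncation $\tau_{\leq n}A$ is realized as a \emph{quotient} DG algebra of $A$ rather than a sub-DG-algebra, so the quasi-isomorphism runs $A\twoheadrightarrow A'$ instead of $A'\hookrightarrow A$; this does not affect your argument, since either direction induces the needed equivalence of derived categories.
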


Here is the DG version of $\pd$-test objects we use to prove Theorem~\ref{thm140915a}.

\begin{defn} \label{defn190913b} 
Let $A$ be a weakly local DG algebra.
We say that $N\in\catdfb(A)$ is \emph{perfect} over $A$ if $M$ satisfies the equivalent conditions of Fact~\ref{fact190913a}.
Then $M\in\catdfb(A)$ is \emph{$\pd$-test} over $A$ if the following condition holds for all  $N\in\catdfb(A)$:
If $\Lotimes[A] MN\in\catdb(A)$, then $N$ is perfect.
\end{defn}

\begin{ex}\label{disc190913a}
If $(A,\m_A,k)$ is a weakly local DG algebra, then Fact~\ref{fact190913a} implies that $k$ is $\pd$-test over $A$.
\end{ex}

Our proof of Theorem~\ref{thm140915a} follows almost directly from the following result via a construction of Avramov.

\begin{thm}\label{thm190913a}
Let $A$ be a finite-dimensional DG $k$-algebra with $A_0=k$ and $\HH_0(A)\neq 0$.
Let $k\to l$ be a  field extension, set $B=\Otimes[k]lA$, and consider the natural morphism of DG algebras $A\to B$.
Let $M\in\catdfb(A)$ be given.
\begin{enumerate}[\rm(a)]
\item \label{thm190913a1}
If 
$\Lotimes[A] BM$ is $\pd$-test  over $B$, then 
$M$ is  $\pd$-test  over $A$.
\item \label{thm190913a2}
The converse of part~\eqref{thm190913a1} holds if the extension $k\to l$ is algebraic.
\end{enumerate}
\end{thm}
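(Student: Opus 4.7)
The plan is to set up two base-change identities
\[
\Lotimes[B]{(\Lotimes[A]{B}{M})}{X} \simeq \Lotimes[A]{M}{X} \quad\text{and}\quad \Lotimes[B]{X}{l} \simeq \Lotimes[A]{X}{k}
\]
for every $X\in\catd(B)$, both of which follow from associativity of the derived tensor product together with the identification $l\simeq\Lotimes[A]{B}{k}$ (using that $B$ is flat over $A$, since $l/k$ is flat). I will also use repeatedly that $B$ is faithfully flat over $A$, so $\HH(-)$ commutes with the base change $A\to B$ and homological boundedness transfers in either direction.

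For part~\eqref{thm190913a1}, I would take $N\in\catdfb(A)$ with $\Lotimes[A]{M}{N}\in\catdb(A)$. Flatness of $B$ over $A$ places $\Lotimes[A]{B}{N}$ in $\catdfb(B)$, and the first identity yields $\Lotimes[B]{(\Lotimes[A]{B}{M})}{(\Lotimes[A]{B}{N})}\in\catdb(B)$. The hypothesis that $\Lotimes[A]{B}{M}$ is $\pd$-test over $B$ then forces $\Lotimes[A]{B}{N}$ to be perfect over $B$, so $\Lotimes[B]{(\Lotimes[A]{B}{N})}{l}\in\catdb$. Via the second identity this reads $\Lotimes[k]{l}{(\Lotimes[A]{N}{k})}\in\catdb$, and faithful flatness of $l/k$ descends this to $\Lotimes[A]{N}{k}\in\catdb$, whence $N$ is perfect over $A$ by Fact~\ref{fact190913a}.

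For part~\eqref{thm190913a2}, I would first dispatch the case $[l:k]<\infty$. Then $B$ is finite over $A$, so any $N'\in\catdfb(B)$ restricts to an object of $\catdfb(A)$; the first identity turns the hypothesis into $\Lotimes[A]{M}{N'}\in\catdb$, the $\pd$-test assumption on $M$ over $A$ forces $\Lotimes[A]{N'}{k}\in\catdb$, and the second identity rewrites this as $\Lotimes[B]{N'}{l}\in\catdb$, so $N'$ is perfect over $B$. For the general algebraic case I would write $l=\colim_i l_i$ as a filtered colimit of finite subextensions, set $B_i=\Otimes[k]{l_i}{A}$ (so $B=\colim_i B_i$), and prove the following \textbf{descent lemma}: every $N'\in\catdfb(B)$ is quasi-isomorphic in $\catd(B)$ to $\Lotimes[B_i]{B}{N_i}$ for some $i$ and some $N_i\in\catdfb(B_i)$. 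Granting this, set $M_i=\Lotimes[A]{B_i}{M}$; the finite case applied to the extension $A\to B_i$ gives that $M_i$ is $\pd$-test over $B_i$. Unwinding base change identifies $\Lotimes[B]{(\Lotimes[A]{B}{M})}{N'}$ with $\Lotimes[B_i]{B}{(\Lotimes[B_i]{M_i}{N_i})}$, and faithful flatness of $B$ over $B_i$ descends the boundedness hypothesis to $\Lotimes[B_i]{M_i}{N_i}\in\catdb(B_i)$. The $\pd$-test property of $M_i$ over $B_i$ makes $N_i$ perfect over $B_i$, and base change preserves perfection, so $N'\simeq\Lotimes[B_i]{B}{N_i}$ is perfect over $B$.

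The principal obstacle is the descent lemma, and it is the only step where the algebraic hypothesis on $k\to l$ enters essentially. Its plausibility rests on the finite dimensionality of $A$ over $k$: since $\HH(B)=\Otimes[k]{l}{\HH(A)}$ is a finite-dimensional $l$-algebra and $\HH(N')$ is finitely generated over it, $\HH(N')$ is a finite-dimensional graded $l$-vector space whose $\HH(B)$-module structure is encoded by finitely many structure constants, all of which lie in some $l_i$. Lifting this descent of homology to a descent of $N'$ within $\catd(B)$ requires building a minimal semi-free resolution $F'\to N'$ over $B$ with finitely generated components in each degree (typically unbounded above, since $N'$ need not be perfect), then enlarging $l_i$ so as to capture all the differentials; this is where I would invoke DG-algebra machinery in the spirit of~\cite[Appendix~B]{avramov:htecdga}. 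Algebraicity of $l/k$ is essential precisely because it allows every finite subset of $l$ to be captured in a common finite subextension $l_i$, a property that fails for purely transcendental extensions.
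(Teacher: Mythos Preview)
Your overall strategy matches the paper's exactly: part~\eqref{thm190913a1} and the finite-extension case of part~\eqref{thm190913a2} are handled just as in the paper via the two base-change identities, and for the general algebraic case you correctly reduce to the finite case by a descent lemma asserting $N'\simeq\Lotimes[B_i]{B}{N_i}$ for some finite subextension $l_i$ and some $N_i\in\catdfb(B_i)$.

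The one genuine gap is in your sketch of that descent lemma. You propose to take a minimal semi-free resolution $F'\to N'$ and then enlarge $l_i$ so as to capture all the differentials; but since $F'$ is unbounded above (as you yourself note), its differential may involve infinitely many structure constants in $l$, and an infinite set of algebraic elements need not lie in any single finite subextension. The paper avoids this entirely by a simpler device: because $N'\in\catdfb(B)$ and $B$ is finite-dimensional over $l$, one first truncates a degreewise finite semi-free resolution so that $N'$ may be replaced in $\catd(B)$ by a \emph{bounded} DG $B$-module that is finite-dimensional over $l$. Now the differential and the $B$-action are encoded by finitely many elements of $l$; these generate a finite subextension $k'$, and $N'$ visibly descends to a DG module $L$ over $A'=\Otimes[k]{k'}A$ with $N'\cong\Otimes[A']{B}{L}$. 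From there your argument (faithful flatness plus the finite case applied to $A\to A'$) goes through unchanged.
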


\begin{proof}
\eqref{thm190913a1}
We argue as in the proof of~\cite[Theorem~3.2]{celikbas:tgp}. 
Assume that $\Lotimes[A]BM$ is $\pd$-test  over $B$.
To show that $M$ is $\pd$-test  over $A$,  let $N\in\catdfb(A)$ be such that
$\Lotimes[A] MN\in\catdb(A)$.
As $N$ is homologically finite over $A$, which is finite dimensional over $k$,
we have
$$\Lotimes[A] BN\simeq\Lotimes[A] {(\Otimes[k]lA)}N\simeq\Otimes[k]lN\in\catdfb(l)
$$
so $\Lotimes[A] BN\in\catdfb(B)$,
and similarly $\Lotimes[A] BM,\Lotimes[A] B{(\Lotimes[A] MN)}\in\catdfb(B)$.
Moreover, we have the following isomorphisms
in $\catd(B)$:
$$\Lotimes[B]{(\Lotimes[A] BM)}{(\Lotimes[A] BN)}
\simeq\Lotimes[A]{(\Lotimes[A] BM)}N
\simeq\Lotimes[A] B{(\Lotimes[A] MN)}.$$
As $\Lotimes[A] BM$ is $\pd$-test over $B$,  the DG $B$-module $\Lotimes[A]BN$ is perfect.
It is straightforward to show (arguing as above or using a minimal semi-free resolution of $N$ over $A$, with Fact~\ref{fact190913a})
that $N$ is perfect over $A$, as desired.

\eqref{thm190913a2}
Assume now that $M$ is  $\pd$-test  over $A$.

Case 1: $k\to l$ is finite.
To show that $\Lotimes[A] {B}M$ is $\pd$-test over $B$, 
let $N\in\catdfb(B)$ be such that
$\Lotimes[B] {(\Lotimes[A] {B}M)}N\in\catdb(B)$.
Since $k\to l$ is finite, we have $B\in\catdfb(A)$, so 
$N\in\catdfb(A)$.
Moreover, 
$\Lotimes[A]MN\simeq\Lotimes[B] {(\Lotimes[A] {B}M)}N\in\catdb(A)$.
As $M$ is $\pd$-test  over $A$, the DG module $N$ is perfect over $A$.
To conclude that $N$ is perfect over $B$,
it suffices to show that $\Lotimes[B]Nl\in\catdb(B)$, equivalently, that $\Lotimes[B]Nl\in\catdb(A)$.
By assumption, we have $\Lotimes[A]Nk\in\catdb(A)$.
By construction, we have 
$$\Otimes[A]Bk\cong\Otimes[A]{(\Otimes[k]lA)}k\cong\Otimes[k]lk\simeq l
$$
so
$$\Lotimes[B]Nl\simeq\Lotimes[B]N{(\Otimes[A]Bk)}\simeq\Lotimes[A]Nk\in\catdb(A)
$$
as desired.

Case 2: $k\to l$ is algebraic.
To show that $\Lotimes[A] {B}M$ is $\pd$-test over $B$, 
let $N\in\catdfb(B)$ be such that
$\Lotimes[B] {(\Lotimes[A] {B}M)}N\in\catdb(B)$.
Truncate a degreewise finite $B$-semifree resolution of $N$ if necessary to assume without loss of generality that
$N$ is finite dimensional over $l$.
It follows that the differential and scalar multiplication on $N$ are represented by matrices consisting of finitely many elements of $l$.
Let $k'$ be the intermediate field extension $k\to k'\to l$ generated over $k$ by this finite set of elements. 
Since $l$ is algebraic over $k$, the extension $k\to k'$ is finite.
Set $A'=\Otimes[k]{k'}A$.
Since $k'$ contains the elements representing the differential and scalar multiplication on $N$, there is a DG $A'$-module $L$ 
that is bounded and degreewise finite over $k'$ such that $N\cong \Otimes[A']BL$. 

Using these constructions, we compute:
\begin{align*}
\Lotimes[B] {(\Lotimes[A] {B}M)}N
&\simeq\Lotimes[B] {(\Lotimes[A'] {B}{(\Lotimes[A]{A'}M)})}{(\Otimes[A']BL)}\\
&\simeq\Lotimes[A']{B}{(\Lotimes[A']{(\Lotimes[A]{A'}M)}{L})}\\
&\simeq\Lotimes[A']{(\Lotimes[k']{l}{A'})}{(\Lotimes[A']{(\Lotimes[A]{A'}M)}{L})}\\
&\simeq\Lotimes[k']{l}{(\Lotimes[A']{(\Lotimes[A]{A'}M)}{L})}
\end{align*}
Thus, we have
$\Lotimes[k']{l}{(\Lotimes[A']{(\Lotimes[A]{A'}M)}{L})}\simeq\Lotimes[B] {(\Lotimes[A] {B}M)}N\in\catdb(B)$.
Since $k'\to l$ is faithfully flat, it follows that $\Lotimes[A']{(\Lotimes[A]{A'}M)}{L}$ is homologically bounded as well.
Case 1 implies that $\Lotimes[A]{A'}M$ is $\pd$-test over $A'$,
so the homological boundedness of $\Lotimes[A']{(\Lotimes[A]{A'}M)}{L}$ implies that $L$ is perfect over $A'$.
It follows readily that $N\simeq\Lotimes[A']BL$ is perfect over $B$, as desired.
\end{proof}

The next result uses the exterior DG algebra structure on the Koszul complex.

\begin{lem}\label{lem190915a}
Let $K=K^R(\x)$ be the Koszul complex on $\x=x_1,\ldots,x_n\in\m$, and let $M\in\catdfb(R)$.
\begin{enumerate}[\rm(a)]
\item \label{lem190915a1}
If $\Lotimes KM$ is $\pd$-test over $K$, then $M$ is $\pd$-test over $R$.
\item \label{lem190915a2}
The converse of part~\eqref{lem190915a1} holds when $\x$ is part of a minimal generating sequence for $\m$.
\end{enumerate}
\end{lem}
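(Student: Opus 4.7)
Both parts follow the template of Proposition~\ref{prop140825a} and Theorem~\ref{thm190913a}, combining the base-change identities
\[
\Lotimes[K]{(\Lotimes K M)}{(\Lotimes K N)}\simeq\Lotimes K{(\Lotimes M N)}\quad\text{and}\quad \Lotimes[K]{(\Lotimes K M)}{N}\simeq\Lotimes M N
\]
(valid since $K$ is a bounded complex of finite free $R$-modules, with the second used when $N\in\catd(K)$) with the characterization of perfection from Fact~\ref{fact190913a} and the further change-of-rings identity $\Lotimes[K]{X}{k}\simeq\Lotimes[\wedge]{(\Lotimes X k)}{k}$ for $X\in\catd(K)$, where $\wedge:=K\otimes_R k\simeq\Lotimes K k$ is the DG $k$-algebra quotient of $K$.

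For part (a), assume $\Lotimes K M$ is $\pd$-test over $K$ and pick $N\in\catdfb(R)$ with $\Lotimes M N\in\catdb(R)$. The first base-change identity yields $\Lotimes[K]{(\Lotimes K M)}{(\Lotimes K N)}\in\catdb(K)$, so $\Lotimes K N\in\catdfb(K)$ is perfect over $K$; Fact~\ref{fact190913a} combined with the change-of-rings identity then gives $\Lotimes N k\simeq\Lotimes[K]{(\Lotimes K N)}{k}\in\catdb(R)$, proving $\pd_R(N)<\infty$.

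For part (b), assume $M$ is $\pd$-test over $R$ and $N\in\catdfb(K)$ satisfies $\Lotimes[K]{(\Lotimes K M)}{N}\in\catdb(K)$. Since $K\in\catdfb(R)$, we have $N\in\catdfb(R)$, and the second base-change identity yields $\Lotimes M N\in\catdb(R)$; the $\pd$-test property of $M$ then gives $\pd_R(N)<\infty$, i.e.\ $\Lotimes N k\in\catdb(R)$. To conclude that $N$ is perfect over $K$, it suffices, by Fact~\ref{fact190913a} and change of rings, to show $\Lotimes[\wedge]{(\Lotimes N k)}{k}\in\catdb(\wedge)$.

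This final step is the main obstacle. Here the minimal-generating hypothesis enters decisively: it guarantees that $\bar x_1,\dots,\bar x_n$ span an $n$-dimensional subspace of $\m/\m^2$, which rules out the pathology occurring when $\x$ has entries in $\m^2$ (the analogous implication fails already for $R$ regular and $\x=(x^2)$, since then $K\simeq R/(x^2)$ is non-regular). The plan is to take a minimal semi-free $K$-resolution $F\xra\simeq N$ from Fact~\ref{fact190913a}; minimality forces the induced differential on $F\otimes_R k\simeq\Lotimes N k$ to take values in $\wedge_+(F\otimes_R k)$, so this complex is a minimal semi-free $\wedge$-complex whose underlying graded $\wedge$-module is free on the semi-basis of $F$. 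Using boundedness of $\Tor{*}{N}{k}\cong\HH(F\otimes_R k)$ from $\pd_R(N)<\infty$, together with the DG compatibility $\partial^2=0$ in the Koszul algebra $K$ and the linear independence of $\bar x_1,\dots,\bar x_n$ in $\m/\m^2$, a Nakayama/counting argument then forces the semi-basis of $F$ to be finite, giving $\Lotimes[K]{N}{k}\in\catdb(K)$ and hence $N$ perfect over $K$.
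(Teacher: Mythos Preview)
Your treatment of part~(a) and the reduction in part~(b) are correct and match the paper's approach: both arrive at the point where $N\in\catdfb(K)$ has $\pd_R(N)<\infty$, and the remaining task is to show $\Lotimes[K]Nk\in\catdb(K)$.

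The gap is in your final paragraph. You assert that a ``Nakayama/counting argument'' on the minimal semi-free $\wedge$-module $G=F\otimes_Rk$ forces the semi-basis to be finite once $\HH(G)$ is bounded. But this implication is \emph{false} for arbitrary minimal semi-free $\wedge$-modules: the minimal semi-free resolution of $k$ over $\wedge$ has $\HH\cong k$ yet an infinite semi-basis (its $\Tor^{\wedge}(k,k)$ is a polynomial algebra). So you must exploit that $G$ arises from a DG $K$-module $F$, and you have not shown how. The ingredients you list ($\partial_F^2=0$ and linear independence of $\bar x_1,\dots,\bar x_n$) are exactly the right ones, and they can be used to prove inductively that the differential on $G$ actually \emph{vanishes}: looking at $\partial_F^2=0$ degree by degree, one finds relations of the form $\sum_i c_i x_i\in\ideal m^2$, whence $\bar c_i=0$ by linear independence in $\ideal m/\ideal m^2$. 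Once $\bar\partial_G=0$, boundedness of $\HH(G)=G$ is equivalent to finiteness of the semi-basis. But this vanishing is a genuine theorem, not a Nakayama/counting triviality.

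The paper handles this step differently and more efficiently: it invokes \cite[Theorem~9.1]{MR2592508} to obtain an isomorphism $\Lotimes kK\simeq\HH(\Lotimes kK)$ in $\catd(K)$ (with the right side a sum of shifts of $k$), yielding
\[
\Lotimes kN\simeq\Lotimes[K]{\HH(\Lotimes kK)}N\simeq\Lotimes[k]{\HH(\Lotimes kK)}{(\Lotimes[K]kN)},
\]
so that K\"unneth transfers boundedness from $\Lotimes kN$ to $\Lotimes[K]kN$. This formality result is precisely where the minimal-generating hypothesis enters, and it is equivalent to the vanishing of $\bar\partial_G$ that your sketch needs. Either way, the missing step is substantive; as written, your argument for part~(b) is incomplete.
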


\begin{proof}
\eqref{lem190915a1}
Argue as in the descent result~\cite[Theorem~3.2]{celikbas:tgp}.

\eqref{lem190915a2}
We argue as in the proof of Theorem~\ref{thm190913a}\eqref{thm190913a2}, with a twist.
Assume  that $M$ is a  $\pd$-test complex over $R$.
To show that $\Lotimes{K}M$ is $\pd$-test over $K$, 
let $N\in\catdfb(K)$ be such that
$\Lotimes[K] {(\Lotimes{K}M)}N\in\catdb(K)$.
Since $K\in\catdfb(R)$, we have 
$N\in\catdfb(R)$.
Moreover, 
$\Lotimes MN\simeq\Lotimes[K] {(\Lotimes{K}M)}N\in\catdb(R)$.
As $M$ is $\pd$-test  over $R$, the DG module $N$ is perfect over $R$, so $\Lotimes kN\in\catdb(R)$.
To conclude that $N$ is perfect over $K$, we compute:
\begin{align*}
\Lotimes kN
&\simeq\Lotimes[K]{(\Lotimes kK)}{N}
\simeq\Lotimes[K]{\HH(\Lotimes kK)}N
\simeq\Lotimes[k]{\HH(\Lotimes kK)}{(\Lotimes[K]kN)}
\end{align*}
The second isomorphism here is from~\cite[Theorem~9.1]{MR2592508}; this is where we use the fact that $\x$ is part of a minimal
generating sequence for $\m$.
Since $\Lotimes kN$ is homologically bounded, 
the K\"unneth formula implies that $\Lotimes[K]kN\in\catdb(K)$, so $N$ is perfect.
\end{proof}

Here is our main result about $\pd$-text complexes.

\begin{thm}\label{thm140915a}
Assume that $\vf$ is flat with regular closed fibre, and let $M\in\catdfb(R)$.
Assume the induced field extension $k \to l$ is algebraic.
Then $M$ is a  $\pd$-test complex over $R$ if and only if $\Lotimes SM$ is a $\pd$-test complex over $S$.
\end{thm}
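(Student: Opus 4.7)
The direction ``$\Lotimes SM$ is $\pd$-test over $S$ implies $M$ is $\pd$-test over $R$'' is immediate from Proposition~\ref{prop140825a}\eqref{prop140825a1}, so I focus on the converse. Assume $M$ is $\pd$-test over $R$; the plan is a three-step reduction that culminates in an application of Theorem~\ref{thm190913a}\eqref{thm190913a2}.

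The first reduction collapses the closed fibre to $l$. Following Remark~\ref{disc190914a}, choose $\y=y_1,\ldots,y_n\in\n$ whose images form a regular system of parameters for $S/\m S$, and set $\ol S=S/(\y)$; then $\tau\vf\colon R\to\ol S$ is flat with closed fibre $l$ and maximal ideal $\m\ol S$, while the quotient $\tau\colon S\to\ol S$ has finite flat dimension since $\y$ is $S$-regular. By Proposition~\ref{prop140825a}\eqref{prop140825a1} applied to $\tau$, it suffices to show that $\Lotimes{\ol S}{M}\simeq\Lotimes[S]{\ol S}{(\Lotimes{S}{M})}$ is $\pd$-test over $\ol S$, so I may assume $\m S=\n$ (while still having $k\to l$ algebraic). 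The second reduction passes to Koszul complexes: let $\x=x_1,\ldots,x_d$ be a minimal generating sequence for $\m$. Since $\m S=\n$ and $k\to l$ is algebraic, $\dim_l(\n/\n^2)=\dim_l(\Otimes[k]{l}{\m/\m^2})=d$, so $\vf(\x)$ is also a minimal generating sequence for $\n$. With $K^R=K^R(\x)$ and $K^S=K^S(\vf(\x))\simeq\Lotimes[R]{S}{K^R}$, Lemma~\ref{lem190915a}\eqref{lem190915a2} gives that $\Lotimes{K^R}{M}$ is $\pd$-test over $K^R$, and Lemma~\ref{lem190915a}\eqref{lem190915a1} reduces the task to proving that $\Lotimes[K^R]{K^S}{(\Lotimes{K^R}{M})}\simeq\Lotimes[S]{K^S}{(\Lotimes{S}{M})}$ is $\pd$-test over $K^S$.

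The final reduction uses a construction of Avramov to pass from the Koszul DG algebras $K^R,K^S$ to their fibre exterior algebras $E^k:=\Otimes[R]{k}{K^R}\cong\bigwedge_k k^d$ and $E^l:=\Otimes[S]{l}{K^S}\cong\Otimes[k]{l}{E^k}$. These are finite-dimensional DG algebras over $k$ and $l$ respectively, with $(E^k)_0=k$, $(E^l)_0=l$, and $E^l=\Otimes[k]{l}{E^k}$, so they match precisely the setting required by Theorem~\ref{thm190913a}. Avramov's construction translates the $\pd$-test property of $\Lotimes{K^R}{M}$ over $K^R$ into the $\pd$-test property of a corresponding DG $E^k$-module obtained by fibre base change, and does the analogous translation on the $S$-side between $K^S$ and $E^l$. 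Granting this translation, Theorem~\ref{thm190913a}\eqref{thm190913a2}, applied with $A=E^k$, $B=E^l$, and the algebraic extension $k\to l$, transports the $\pd$-test property from $E^k$ to $E^l$; reversing Avramov's correspondence on the $S$-side yields that $\Lotimes[K^R]{K^S}{(\Lotimes{K^R}{M})}$ is $\pd$-test over $K^S$, and Lemma~\ref{lem190915a}\eqref{lem190915a1} completes the proof.

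The main obstacle is this last step: $K^R$ is bounded but not finite-dimensional over $k$, so Theorem~\ref{thm190913a} cannot be applied to $K^R$ directly, even though $\HH(K^R)$ is a finite-dimensional $k$-vector space. The technical heart of the argument is Avramov's passage between DG modules over the Koszul complex and those over its fibre exterior algebra, without which the reduction to the finite-dimensional DG setting---and hence to Theorem~\ref{thm190913a}---would not be available; the remaining steps are essentially bookkeeping around the descent results of Section~\ref{sec150429a} and the Koszul-reduction Lemma~\ref{lem190915a}.
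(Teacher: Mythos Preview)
Your first two reductions are fine and essentially match the paper's Cases~2--3, but the final step has a genuine gap: there is no ``Avramov translation'' between DG modules over $K^R$ and DG modules over $E^k=\Otimes[R]{k}{K^R}$. The DG algebra $E^k$ has zero differential (since each $x_i\in\m$), so $\HH(E^k)=\bigwedge_k k^d$ has $k$-dimension $2^d$ independently of $R$, whereas $\dim_k\HH(K^R)$ varies with $R$ (for instance it equals $1$ when $R$ is regular). Thus $K^R$ and $E^k$ are not quasi-isomorphic in general, their derived categories are not equivalent, and the $\pd$-test property cannot be transported between them by base change along $R\to k$. Your honest acknowledgement that this step is the ``main obstacle'' is correct, but the target you have chosen is the wrong one.

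The paper's construction is different and requires an additional reduction you omitted: pass to completions so that $R$ admits a minimal Cohen presentation $Q\twoheadrightarrow R$ with $Q$ regular local, and take a bounded degreewise finite semi-free DG $Q$-algebra resolution $F\xra{\simeq}R$. The finite-dimensional DG $k$-algebra one wants is $A=\Otimes[Q]{F}{k}$, not $\Otimes[R]{k}{K^R}$. The link to $K^R$ is through the zig-zag of \emph{quasi-isomorphisms} of DG algebras
\[
K^R\;\cong\;\Otimes[Q]{R}{K^Q}\;\xla{\simeq}\;\Otimes[Q]{F}{K^Q}\;\xra{\simeq}\;\Otimes[Q]{F}{k},
\]
the right-hand map being a quasi-isomorphism because $K^Q\xra{\simeq}k$ over the regular ring $Q$. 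On the $S$-side one builds $Q'$, $F'=\Otimes[Q]{Q'}{F}$ compatibly (via \cite[(1.6)~Theorem]{avramov:solh}) and checks $\Otimes[Q']{F'}{l}\cong\Otimes[k]{l}{A}$, which is exactly the input Theorem~\ref{thm190913a}\eqref{thm190913a2} needs. Restriction of scalars and base change along these quasi-isomorphisms then move the $\pd$-test property across the zig-zag. So the missing ingredient is not a mysterious correspondence but the DG resolution $F$ over a regular ring, together with the completion step that guarantees such a $Q$ exists.
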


\begin{proof}
As with Theorem~\ref{thm140825a}, one implication is covered by Proposition~\ref{prop140825a}\eqref{prop140825a1}.
For the reverse implication, assume that $M$ is a  $\pd$-test complex over $R$.

Case 1: $R$ and $S$ are complete and $\m S=\n$. 
Let $\tau\colon Q\to R$ be a minimal Cohen presentation of $R$, so the map is a surjective ring homomorphism where
$Q$ is a complete regular local ring with the same embedding dimension as $R$.
Apply~\cite[(1.6)~Theorem]{avramov:solh} to find a commutative diagram of local homomorphisms of complete local rings
$$\xymatrix{Q\ar[r]^-\alpha\ar[d]_-\tau&Q'\ar[d]^-{\tau'}\\
R\ar[r]^-\vf&S}$$
such that $\alpha$ is flat with regular closed fibre, $\tau'$ is surjective, and the induced map
$\Otimes[Q]R{Q'}\to S$ is an isomorphism. 
Moreover, the proof of \textit{loc.\ cit.} shows that $\edim(Q'/\m_Q Q')=\edim(S/\m S)=0$, so the closed fibre of $\alpha$ is $Q'/\m_Q Q'\cong l$.

Let $F\xra\simeq R$ be a bounded degreewise finite semi-free DG algebra resolution of $R$ over $Q$.
The isomorphism $\Otimes[Q]R{Q'}\cong S$ from the previous paragraph implies that $F'=\Otimes[Q]F{Q'}\xra\simeq S$ is a
bounded degreewise finite semi-free DG algebra resolution of $S$ over $Q'$ since $\alpha$ is flat.
Let $\x$ be a minimal generating sequence for $\m$. Our assumptions imply that $\vf(\x)$ is a minimal generating sequence for $\n$.
Set $K^R=K^R(\x)$ and $K^S=K^S(\vf(\x))$ and consider the following standard commutative diagram of morphisms of DG algebras
where $K^Q$ is the Koszul complex over $Q$ on a lift $\y$ of the sequence $\x$, and $K^{Q'}=K^{Q'}(\alpha(\y))\cong\Otimes[Q]{Q'}{K^Q}$.
\begin{equation}\begin{split}\label{eq191118a}\xymatrix{
R\ar[r]\ar[d]_-\vf&K^R\ar[d]&\Otimes[Q]{R}{K^Q}\ar[d]\ar[l]_-\cong&\Otimes[Q]{F}{K^Q}\ar[d]\ar[l]_-\simeq\ar[r]^-\simeq&\Otimes[Q]{F}{k}\ar[d]
\\
S\ar[r]&K^S&\Otimes[Q']{S}{K^{Q'}}\ar[l]_-\cong
&\Otimes[Q']{F'}{K^{Q'}}\ar[l]_-\simeq\ar[r]^-\simeq
&\Otimes[Q']{F'}{l}
}\end{split}\end{equation}
Using the properties catalogued above, it is straightforward to show that each square in this diagram is a pushout square. 

Since $M$ is $\pd$-test over $R$, Lemma~\ref{lem190915a} implies that $\Lotimes{K^R}M$ is $\pd$-test over $K^R$. 
Since the other arrows in the top row of the above diagram are equivalences, it is straightforward to show that
$\Lotimes{K^R}M$ is $\pd$-test over $\Otimes[Q]F{K^Q}$ by restriction of scalars, and 
$\Lotimes[\protect{\Otimes[Q]F{K^Q}}]{(\Otimes[Q]Fk)}{(\Lotimes{K^R}M)}$ is $\pd$-test over $\Otimes[Q]Fk$ by base-change.

From our assumptions, it is straightforward to show that $\Otimes[Q']{F'}{l}\cong\Otimes[k]{l}{(\Otimes[Q]{F}{k})}$.
Thus, Theorem~\ref{thm190913a} implies that 
the following is $\pd$-test over $\Otimes[Q']{F'}l$.
\begin{multline*}
\Lotimes[\protect{\Otimes[Q]Fk}]{(\Otimes[Q']{F'}{l})}{(\Lotimes[\protect{\Otimes[Q]F{K^Q}}]{(\Otimes[Q]Fk)}{(\Lotimes{K^R}M)})}\\
\simeq
\Lotimes[\protect{\Otimes[Q']{F'}{K^{Q'}}}]{(\Otimes[Q']{F'}{l})}{(\Lotimes[\protect{\Otimes[Q]F{K^Q}}]{(\Otimes[Q']{F'}{K^{Q'}})}{(\Lotimes{K^R}M)})}
\end{multline*}
From the bottom-right equivalence in the diagram~\eqref{eq191118a}, we conclude that the DG module
$\Lotimes[\protect{\Otimes[Q]F{K^Q}}]{(\Otimes[Q']{F'}{K^{Q'}})}{(\Lotimes{K^R}M)}$ is $\pd$-test over $\Otimes[Q']{F'}{K^{Q'}}$
by restriction of scalars.
From the other equivalences in the bottom row of the diagram, we see that
the following DG module is $\pd$-test over $K^S$;
\begin{align*}
\Lotimes[\protect{\Otimes[Q]F{K^Q}}]{(\Otimes[Q]Fk)}{(\Lotimes{K^R}M)}
&\simeq\Lotimes[K^R]{K^S}{(\Lotimes{K^R}M)}
\simeq\Lotimes[S]{K^S}{(\Lotimes{S}M)}
\end{align*}
the first isomorphism comes from the pushout property of the diagram.
Lemma~\ref{lem190915a} implies that $\Lotimes{S}M$ is $\pd$-test over $S$. 
This concludes the proof in Case~1.

Case 2: $\m S=\n$. In this case, consider the following natural diagram of flat local ring homomorphisms.
$$\xymatrix{
R\ar[r]\ar[d]&S\ar[d]\\
\comp R\ar[r]&\comp S}$$
Then $\Lotimes{\comp R}M$ is $\pd$-test over $\comp R$ by Theorem~\ref{thm140825a}.
Case~1 then implies that the following complex is $\pd$-test over $\comp S$.
\begin{align*}
\Lotimes[\comp R]{\comp S}{(\Lotimes{\comp R}M)}
&\simeq\Lotimes[S]{\comp S}{(\Lotimes{S}M)}
\end{align*}
Another application of Theorem~\ref{thm140825a} implies that $\Lotimes{S}M$ is $\pd$-test over $S$.

Case 3: the general case
follows from Case~2 as in the proof of Theorem~\ref{thm140825a}.
\end{proof}

\begin{disc}\label{disc190915a}
It would be nice to answer~\cite[Question~3.7]{celikbas:tgp} in general.
This is equivalent to proving that the conclusion of Theorem~\ref{thm140915a} holds without the assumption that $k\to l$ is algebraic.
Using a transcendence basis, because of Theorem~\ref{thm140915a} and its proof, this reduces to 
proving that the converse to Theorem~\ref{thm190913a}\eqref{thm190913a1} holds in the case where $k\to l$ is purely transcendental
of finite transcendence degree.
Unfortunately, we do not know how to prove this even when $k\to l$ is purely transcendental of transcendence degree 1.
\end{disc}

\section{G-Dim-Test Results} \label{sec190913a}

Throughout this section,
$(A,\m_A,k)$ is a weakly local DG algebra.

\

Here, we analyze ascent properties for $\gcdim$- and $\gdim$-test modules using the techniques of the preceding section.
Our main result here is Theorem~\ref{thm190915b}.

We say that $C\in\catdfb(A)$ is a \emph{semidualizing} DG $A$-module if the natural homothety morphism
$\chi^A_C\colon A\to\Rhom[A]CC$ is an isomorphism in $\catd(A)$.
For instance, since $A$ is homologically bounded, $A$ itself is a semidualizing DG $A$-module.

Assume that $C$ is a semidualizing DG $A$-module.
We say that $N\in\catdfb(A)$ is \emph{derived $C$-reflexive} over $A$ if 
$\Rhom[A]NC\in\catdb(A)$ and the natural biduality morphism
$\delta^N_C\colon N\to\Rhom[A]{\Rhom[A]NC}C$ is an isomorphism in $\catd(A)$.
Then $M\in\catdfb(A)$ is \emph{$\gcdim$-test} over $A$ if the following condition holds for all  $N\in\catdfb(A)$:
If $\Lotimes[A] MN\in\catdb(A)$, then $N$ is derived $C$-reflexive.
In the case $C=A$, we write $\gdim$-test instead of $\gkdim{A}$-test.
Fact~\ref{fact190913a} implies that $k$ is $\gcdim$-test over $A$ for each semidualizing DG $A$-module $C$
because every perfect DG $A$-module is derived $C$-reflexive.
In particular, $k$ is $\gdim$-test.

We proceed as in the preceding section with some preparatory lemmas.

\begin{lem}\label{lem190929a}
Assume that $A$ is a finite-dimensional DG $k$-algebra with $A_0=k$ and $\HH_0(A)\neq 0$.
Let $k\to l$ be a  field extension, set $B=\Otimes[k]lA$, and consider the natural morphism of DG algebras $A\to B$.
Let $C\in\catdfb(A)$ be given.
Then $\Lotimes[A]BC$ is semidualizing over $B$ if and only if $C$ is semidualizing over $A$.
\end{lem}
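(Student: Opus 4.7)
The plan is to identify the two homothety morphisms via adjunction plus a degreewise tensor-evaluation isomorphism, and then conclude using faithful flatness of $k\to l$.

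Because $l$ is a free (hence faithfully flat) $k$-module, the morphism $A\to B$ is faithfully flat; in particular $\HH(\Lotimes[A]BC)\cong\Otimes[k]l{\HH(C)}$, which is finitely generated over $\HH_0(B)\cong l$ whenever $\HH(C)$ is finitely generated over $\HH_0(A)\cong k$. Hence $\Lotimes[A]BC\in\catdfb(B)$ automatically, and the only remaining task is to verify the homothety isomorphism.

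Replacing $C$ by a bounded quasi-isomorphic representative, I would choose a semi-free resolution $F\xra\simeq C$ over $A$ in which each $F_n$ is a finitely generated free DG $A$-module; such a resolution exists by the minimal-resolution construction for weakly local DG algebras whose residue field equals the ground field (here one uses that $A$ is finite-dimensional over $k=\HH_0(A)$). Then $\Otimes[k]lF\xra\simeq\Lotimes[A]BC$ is semi-free over $B$, and the extension-of-scalars adjunction yields
\[\Rhom[B]{\Lotimes[A]BC}{\Lotimes[A]BC}\simeq\Hom[A]F{\Otimes[k]lC}.\]
The key step is the tensor-evaluation isomorphism
\[\Hom[A]F{\Otimes[k]lC}\cong\Otimes[k]l{\Hom[A]FC}\simeq\Lotimes[A]B{\Rhom[A]CC}.\]
Because each $F_n=A^{a_n}$ is finitely generated free over $A$ and $C$ is bounded, every degree-$p$ piece of both sides reduces to a \emph{finite} product of $\Hom$'s from free $A$-modules into bounded pieces of $C$, and tensor with $l$ commutes with such finite products. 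Under the resulting chain of isomorphisms, $\chi^B_{\Lotimes[A]BC}$ is identified with $\Lotimes[A]B{\chi^A_C}$.

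The lemma then follows from faithful flatness of $l$ over $k$, which ensures that $\chi^A_C$ is an isomorphism in $\catd(A)$ if and only if $\Lotimes[A]B{\chi^A_C}$ is an isomorphism in $\catd(B)$. I expect the main technical obstacle to be the verification of the tensor-evaluation isomorphism together with the existence of a degreewise-finite semi-free resolution; both rely critically on the finite-dimensionality of $A$ over $k$.
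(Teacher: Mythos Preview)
Your proof is correct and follows essentially the same approach as the paper: both establish the isomorphism $\Rhom[B]{\Lotimes[A]BC}{\Lotimes[A]BC}\simeq\Lotimes[A]B{\Rhom[A]CC}$, check its compatibility with the two homothety morphisms, and conclude via faithful flatness of $l$ over $k$. The paper packages the argument as a single commutative diagram and justifies the key isomorphism by noting that $B$ is free over $A$ with semi-basis concentrated in degree $0$, whereas you make the tensor-evaluation step explicit via a degreewise-finite semi-free resolution of $C$ and boundedness of $C$; your justification of that step is in fact a bit more detailed than the paper's.
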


\begin{proof}
Note that since $l$ is free over $k$, we have $B\simeq\Lotimes[k]lA$.
Furthermore, the $A$-algebra $B=\Otimes[k]lA$ is free with semi-basis
concentrated in degree 0. These facts justify three of the unspecified isomorphisms in the following diagram, while the fourth one
is tensor-cancellation.
$$\xymatrix@R=6mm{
\Lotimes[k]lA\ar[r]^-{\Lotimes[k]l{\chi^A_C}}\ar[d]_-\simeq&\Lotimes[k]l{\Rhom[A]CC}\ar[d]^-\simeq\\
B\ar[d]_-{\chi^B_{\Lotimes[A]BC}}&\Lotimes[A]{(\Lotimes[k]lA)}{\Rhom[A]CC}\ar[d]^-\simeq\\
\Rhom[B]{\Lotimes[A]BC}{\Lotimes[A]BC}&\Lotimes[A]B{\Rhom[A]CC}\ar[l]_-\simeq
}$$
It follows that $\chi^B_{\Lotimes[A]BC}$ is an isomorphism if and only if $\Lotimes[k]l{\chi^A_C}$ is an isomorphism;
since $l$ is free and non-zero over $k$, the morphism $\Lotimes[k]l{\chi^A_C}$ is an isomorphism is and only if $\chi^A_C$ is one.
Since $\Lotimes[A]BC\in\catdfb(B)$, the desired conclusion now follows. 
\end{proof}

\begin{lem}\label{lem190929b}
Let $A$ be a finite-dimensional DG $k$-algebra with $A_0=k$ and $\HH_0(A)\neq 0$.
Let $k\to l$ be a  field extension, set $B=\Otimes[k]lA$, and consider the natural morphism of DG algebras $A\to B$.
Let $M\in\catdfb(A)$ be given, and let $C$ be a semidualizing DG $A$-module.
Then $\Lotimes[A]BM$ is derived $(\Lotimes[A]BC)$-reflexive over $B$ if and only if $M$ is derived $C$-reflexive over $A$.
\end{lem}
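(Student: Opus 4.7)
The plan is to mirror the proof of Lemma~\ref{lem190929a}, replacing the homothety morphism $\chi$ by the biduality morphism $\delta$ and carrying along the homological boundedness of the dual as a separate condition. Two equivalences need to be established: first, $\Rhom[A]MC\in\catdb(A)$ if and only if $\Rhom[B]{\Lotimes[A]BM}{\Lotimes[A]BC}\in\catdb(B)$; second, $\delta^M_C$ is an isomorphism in $\catd(A)$ if and only if $\delta^{\Lotimes[A]BM}_{\Lotimes[A]BC}$ is an isomorphism in $\catd(B)$.

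The central computation is the natural isomorphism
$$\Rhom[B]{\Lotimes[A]BM}{\Lotimes[A]BC}\simeq\Lotimes[A]B\Rhom[A]MC$$
in $\catd(B)$. I would assemble it from three ingredients: tensor-hom adjunction $\Rhom[B]{\Lotimes[A]BM}{-}\simeq\Rhom[A]M{-}$; the identification $\Lotimes[A]BC\simeq\Lotimes[k]lC$ in $\catd(A)$, coming from $B\simeq\Lotimes[k]lA$ being free over $A$ with semi-basis in degree $0$ (exactly as in the proof of Lemma~\ref{lem190929a}); and the base-change formula $\Rhom[A]M{\Lotimes[k]lC}\simeq\Lotimes[k]l\Rhom[A]MC$. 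For the last ingredient, note that $\HH_0(A)=k$ (since $A_0=k$ and $\HH_0(A)\neq 0$force $\partial^A_1=0$), so $\HH(M)$ is a finite-dimensional $k$-vector space; hence $M$ admits a semi-free resolution $F\xra\simeq M$ that is bounded below and degreewise finitely generated over $A$. Combining a $k$-basis of $l$ with $F$, and noting that for each total degree only finitely many $F_i$ contribute to $\Hom[A]F{l\otimes_k C}$ (because $C$ is homologically bounded), the required commutation is a direct check.

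Once this central isomorphism is in place, faithful flatness of $B$ over $A$ (which reduces to that of $l$ over $k$) immediately yields the boundedness equivalence. For the biduality equivalence, I would iterate the central isomorphism to obtain
$$\Rhom[B]{\Rhom[B]{\Lotimes[A]BM}{\Lotimes[A]BC}}{\Lotimes[A]BC}\simeq\Lotimes[A]B\Rhom[A]{\Rhom[A]MC}C,$$
fit $\delta^{\Lotimes[A]BM}_{\Lotimes[A]BC}$ into a commutative square analogous to the one in Lemma~\ref{lem190929a} whose vertical and horizontal isomorphisms identify it with $\Lotimes[k]l\delta^M_C$, and then conclude by invoking that $\Lotimes[k]l\delta^M_C$ is an isomorphism if and only if $\delta^M_C$ is, since $l$ is nonzero and free over $k$.

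The hard part will be the naturality verification in the last step: confirming that, under the iterated central isomorphism, $\delta^{\Lotimes[A]BM}_{\Lotimes[A]BC}$ truly corresponds to the base change of $\delta^M_C$. This is a formal but careful tracking of adjunction units and evaluation morphisms through several composed isomorphisms. The construction of the degreewise finite semi-free resolution is standard in this setting (compare Fact~\ref{fact190913a}, which handles the perfect case) but deserves explicit mention to ensure the base-change formula is valid even when $k\to l$ is infinite.
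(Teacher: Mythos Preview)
Your proposal is correct and follows essentially the same approach as the paper: establish the base-change isomorphism $\Rhom[B]{\Lotimes[A]BM}{\Lotimes[A]BC}\simeq\Lotimes[k]l{\Rhom[A]MC}$ as in Lemma~\ref{lem190929a}, deduce the boundedness equivalence by faithful flatness, and then iterate the isomorphism to fit $\delta^{\Lotimes[A]BM}_{\Lotimes[A]BC}$ and $\Lotimes[k]l{\delta^M_C}$ into a commutative diagram. The paper simply records the diagram and refers back to Lemma~\ref{lem190929a}, whereas you spell out the resolution argument and the naturality concern more explicitly, but the strategy is the same.
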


\begin{proof}
As in the  proof of Lemma~\ref{lem190929a}, in $\catd(A)$ we have
$\Rhom[B]{\Lotimes[A]BM}{\Lotimes[A]BC}\simeq\Lotimes[k]l{\Rhom[A]MC}$
and therefore $\Rhom[B]{\Lotimes[A]BM}{\Lotimes[A]BC}\in\catdb(B)$ if and only if $\Rhom[A]MC\in\catdb(A)$.
Assume therefore that $\Rhom[A]MC\in\catdb(A)$, and
use the next diagram
$$\xymatrix@R=6mm{
\Lotimes[k]lM\ar[r]^-{\Lotimes[k]l{\delta^M_C}}\ar[dd]_-\simeq&\Lotimes[k]l{\Rhom[A]{\Rhom[A]MC}C}\ar[d]^-\simeq\\
&\Lotimes[A]{(\Lotimes[k]lA)}{\Rhom[A]{\Rhom[A]MC}C}\ar[d]^-\simeq\\
\Lotimes[A]{(\Lotimes[k]lA)}M\ar[dd]_-\simeq&\Lotimes[A]B{\Rhom[A]{\Rhom[A]MC}C}\ar[d]^-\simeq\\
&\Rhom[B]{\Lotimes[A]B{\Rhom[A]MC}}{\Lotimes[A]BC}
\\
\Lotimes[A]BM\ar[r]^-{\delta^{\Lotimes[A]BM}_{\Lotimes[A]BC}}&\Rhom[B]{\Rhom[B]{\Lotimes[A]BM}{\Lotimes[A]BC}}{\Lotimes[A]BC}\ar[u]_-\simeq
}$$
as in the  proof of Lemma~\ref{lem190929a} to complete the argument.
\end{proof}

Here are the versions of Theorem~\ref{thm190913a} and Lemma~\ref{lem190915a} for this context.

\begin{thm}\label{thm190913a'}
Let $A$ be a finite-dimensional DG $k$-algebra with $A_0=k$ and $\HH_0(A)\neq 0$.
Let $k\to l$ be a  field extension, set $B=\Otimes[k]lA$, and consider the natural morphism of DG algebras $A\to B$.
Let $M\in\catdfb(A)$ be given, and let $C$ be a semidualizing DG $A$-module.
\begin{enumerate}[\rm(a)]
\item \label{thm190913a'1}
If 
$\Lotimes[A] BM$ is $\gkdim{\Lotimes[A]BC}$-test  over $B$, then 
$M$ is  $\gkdim{C}$-test  over $A$.
\item \label{thm190913a'2}
The converse of part~\eqref{thm190913a'1} holds if the extension $k\to l$ is algebraic.
\item \label{thm190913a'11}
If 
$\Lotimes[A] BM$ is $\gdim$-test  over $B$, then 
$M$ is  $\gdim$-test  over $A$.
\item \label{thm190913a'22}
The converse of part~\eqref{thm190913a'1} holds if the extension $k\to l$ is algebraic.
\end{enumerate}
\end{thm}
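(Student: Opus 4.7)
The plan is to mirror the structure of Theorem~\ref{thm190913a}, with Lemmas~\ref{lem190929a} and~\ref{lem190929b} playing the role that the direct perfectness calculations did there. Parts~\eqref{thm190913a'11} and~\eqref{thm190913a'22} are the specialization $C = A$ of parts~\eqref{thm190913a'1} and~\eqref{thm190913a'2}, so it suffices to prove the latter two.

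For the descent part~\eqref{thm190913a'1}, I would take $N \in \catdfb(A)$ with $\Lotimes[A]MN \in \catdb(A)$, base-change via $\Lotimes[A]B-$ (which is flat since $B = \Otimes[k]lA$) to find $\Lotimes[A]BN \in \catdfb(B)$ and $\Lotimes[B]{(\Lotimes[A]BM)}{(\Lotimes[A]BN)} \simeq \Lotimes[A]B{(\Lotimes[A]MN)} \in \catdb(B)$, invoke the hypothesis on $\Lotimes[A]BM$ to conclude that $\Lotimes[A]BN$ is derived $(\Lotimes[A]BC)$-reflexive over $B$, and finish via Lemma~\ref{lem190929b}.

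For the ascent part~\eqref{thm190913a'2}, I would split into two cases in parallel with Theorem~\ref{thm190913a}\eqref{thm190913a2}. In Case~1 ($k \to l$ finite), given $N \in \catdfb(B)$ with $\Lotimes[B]{(\Lotimes[A]BM)}N \in \catdb(B)$, I would restrict scalars (using $B \in \catdfb(A)$) to get $N \in \catdfb(A)$ with $\Lotimes[A]MN \in \catdb(A)$, and apply the $\gkdim{C}$-test hypothesis to obtain that $N$ is derived $C$-reflexive over $A$. To upgrade this to derived $(\Lotimes[A]BC)$-reflexivity over $B$, I would use the identification $\Rhom[A]BC \simeq \Lotimes[A]BC$ in $\catd(B)$, which follows from $\Hom[k]lk \cong l$ as $l$-modules (both are one-dimensional $l$-vector spaces, so $l$ is Frobenius over $k$). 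Combined with the standard adjunction $\Rhom[B]N{\Rhom[A]BC} \simeq \Rhom[A]NC$ in $\catd(B)$, this yields $\Rhom[B]N{\Lotimes[A]BC} \simeq \Rhom[A]NC$ in $\catd(B)$, and a routine diagram chase shows the two biduality morphisms correspond under these identifications. Case~2 ($k \to l$ algebraic) would follow Case~2 of Theorem~\ref{thm190913a}\eqref{thm190913a2} almost verbatim: truncate a degreewise finite semi-free $B$-resolution of $N$, descend the finitely many $l$-coefficients to an intermediate finite extension $k' \subset l$, set $A' = \Otimes[k]{k'}A$ and produce a bounded DG $A'$-module $L$, finite over $k'$, with $N \simeq \Lotimes[A']BL$; then apply Case~1 to the extension $A \to A'$ (with base field $k$) and use faithful flatness of $k' \to l$ to deduce that $L$ is derived $(\Lotimes[A]{A'}C)$-reflexive over $A'$, and finally apply Lemma~\ref{lem190929b} to the extension $A' \to B$ (with base field $k'$) to pass reflexivity from $L$ to $N$.

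The main obstacle I anticipate is Case~1 of the ascent statement. In the perfect setting, perfectness is tested against the residue field, a test that transfers trivially from $A$ to $B$ via $\Otimes[A]Bk \cong l$. Derived $C$-reflexivity lacks such a clean residue-field characterization, so I would rely on the Frobenius identification $\Hom[k]lk \cong l$---valid precisely because $l/k$ is a finite field extension---to bridge reflexivity over $A$ and reflexivity over $B$. Once Case~1 is in hand, Case~2 and the descent part assemble formally.
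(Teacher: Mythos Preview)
Your proposal is correct. Part~\eqref{thm190913a'1}, Case~2 of part~\eqref{thm190913a'2}, and the reduction of~\eqref{thm190913a'11}--\eqref{thm190913a'22} to the special case $C=A$ all match the paper's argument. The one genuine difference is Case~1 of part~\eqref{thm190913a'2}. The paper does \emph{not} use the Frobenius identification $\Rhom[A]BC\simeq\Lotimes[A]BC$; instead it exploits the fact that a finite-dimensional DG $k$-algebra has a dualizing DG module $D^A$, observes that $D^B=\Lotimes[A]B{D^A}$ is dualizing for $B$, and transfers derived $C$-reflexivity via the Auslander-class characterization: $N$ is derived $C$-reflexive over $A$ iff $N\in\cata_{\Rhom[A]C{D^A}}(A)$ iff $N\in\cata_{\Lotimes[A]B{\Rhom[A]C{D^A}}}(B)=\cata_{\Rhom[B]{\Lotimes[A]BC}{D^B}}(B)$ iff $N$ is derived $(\Lotimes[A]BC)$-reflexive over $B$. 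This parallels the ring-theoretic argument in Lemma~\ref{lem150421a} and keeps the paper's treatment uniform. Your Frobenius route is more elementary in that it sidesteps the need to develop (or cite) DG analogues of the Auslander-class machinery; the price is the diagram chase verifying that the two biduality morphisms correspond, which is indeed routine once one observes that $\beta_N\colon\Rhom[B]{N}{\Lotimes[A]BC}\xra{\simeq}\Rhom[A]NC$ is natural in $N$ and that each biduality map is adjoint to the corresponding evaluation morphism.
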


\begin{proof}
\eqref{thm190913a'1}
Argue as in the proof of Theorem~\ref{thm190913a}\eqref{thm190913a1}, using Lemma~\ref{lem190929b}.

\eqref{thm190913a'2}
Assume now that $M$ is a  $\gcdim$-test complex over $A$.

Case 1: $k\to l$ is finite.
To show that $\Lotimes[A] {B}M$ is $\gkdim{\Lotimes[A]BC}$-test over $B$, 
let $N\in\catdfb(B)$ be such that
$\Lotimes[B] {(\Lotimes[A] {B}M)}N\in\catdb(B)$.
Since $k\to l$ is finite, we have $B\in\catdfb(A)$, so 
$N\in\catdfb(A)$.
Moreover, 
$\Lotimes[A]MN\simeq\Lotimes[B] {(\Lotimes[A] {B}M)}N\in\catdb(A)$.
As $M$ is $\gcdim$-test  over $A$, the DG module $N$ is derived $C$-reflexive over $A$.
To conclude that $N$ is derived $(\Lotimes[A]BC)$-reflexive over $B$, we use the Auslander class.

Since $A$ is finite dimensional over $k$, it has a dualizing DG module $D^A$. 
From the definition of $B$ as $\Otimes[k]lA$, it is straightforward to show that
$D^B=\Lotimes[A]B{D^A}$ is a dualizing DG $B$-module. 
As in~\cite[(4.7)~Theorem]{christensen:scatac} or~\cite[(4.1.7)]{avramov:rhafgd} one shows that
$N$ is derived reflexive over $A$ if and only if $N$ is in the Auslander class $\cata_{\Rhom[A]C{D^A}}(A)$,
and similarly over $B$.
Moreover, arguing as in~\cite[(5.3)~Proposition]{christensen:scatac}, one sees that
$N$ is in $\cata_{\Rhom[A]C{D^A}}(A)$ if and only if it is in $\cata_{\Lotimes[A]B{\Rhom[A]C{D^A}}}(B)=\cata_{\Rhom[B]{\Lotimes[A]BC}{D^B}}(B)$, that is,
$N$ is derived $C$-reflexive over $A$ if and only if it is derived $(\Lotimes[A]BC)$-reflexive over $B$.
This concludes the proof in Case 1.

Case 2: $k\to l$ is algebraic.
Argue as in Case 2 of the proof of Theorem~\ref{thm190913a}\eqref{thm190913a2}.

\eqref{thm190913a'11}--\eqref{thm190913a'22}
These are the special case $C=A$ of parts~\eqref{thm190913a'1}--\eqref{thm190913a'2}.
\end{proof}

As in Section~\ref{sec190908c}, we use the exterior DG algebra structure on the Koszul complex to prove our ascent result for $\gdim$-test complexes.

\begin{fact}\label{fact190929a}
Let $K=K^R(\x)$ be the Koszul complex on $\x=x_1,\ldots,x_n\in\m$, and let $C\in\catdfb(R)$.
Then $\Lotimes KC$ is semidualizing over $K$ if and only if $C$ is semidualizing over $R$
by~\cite[Lemma~A.3(a)]{christensen:dvke}.
\end{fact}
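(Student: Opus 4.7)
The plan is to identify the homothety morphism $\chi^K_{\Lotimes KC}$ with the base-change $\Lotimes K{\chi^R_C}$ and then to invoke a faithfulness property of the Koszul functor $\Lotimes K{-}$. First I would establish the natural chain of isomorphisms
\[\Rhom[K]{\Lotimes KC}{\Lotimes KC} \simeq \Rhom C{\Lotimes KC} \simeq \Lotimes K{\Rhom CC}\]
in $\catd(K)$. The first isomorphism uses Hom-tensor adjunction for the DG algebra morphism $R\to K$; the second uses that $K$ is a bounded complex of finitely generated free $R$-modules, hence perfect as an $R$-complex. Tracing $\chi^K_{\Lotimes KC}$ through these identifications shows that it corresponds to $\Lotimes K{\chi^R_C}$. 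As an automatic side-check, $\Lotimes KC\in\catdfb(K)$ whenever $C\in\catdfb(R)$, because each homology module of $\Lotimes KC$ is bounded, finitely generated over $R$, and annihilated by $(\x)$, hence finitely generated over $\HH_0(K)=R/(\x)$.

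Granted the identification, the implication ``$C$ semidualizing $\Rightarrow \Lotimes KC$ semidualizing'' is immediate, since $\Lotimes K{-}$ preserves isomorphisms. For the converse, let $Z$ denote the cone of $\chi^R_C$; the hypothesis forces $\Lotimes KZ\simeq 0$ in $\catd(R)$, and I would conclude $Z\simeq 0$ via the following \emph{Koszul faithfulness} lemma: if $X\in\catd(R)$ has finitely generated homology over $R$ and $\Lotimes KX\simeq 0$, then $X\simeq 0$. The proof is by induction on $n$; in the base case $n=1$, acyclicity of the mapping cone of $x_1\colon X\to X$ forces $x_1$ to act invertibly on each $\HH_i(X)$, and since $x_1\in\m$ with $\HH_i(X)$ finitely generated, Nakayama forces $\HH_i(X)=0$. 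The inductive step then factors $K$ as $K(x_n)\otimes_R K^R(x_1,\dots,x_{n-1})$ and applies the $n=1$ case.

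A preliminary step is verifying that $\Rhom CC\in\catdfb(R)$, so that the cone $Z$ really does lie in $\catdfb(R)$ and the faithfulness lemma applies. Finite generation of its homology comes from a degreewise finite semifree resolution of $C$; homological boundedness follows from a parallel Nakayama-style argument, starting from the hypothesis $\Lotimes K{\Rhom CC}\simeq \Rhom[K]{\Lotimes KC}{\Lotimes KC}\in\catdb(K)$ and descending boundedness one variable at a time through the long exact sequence attached to each $\Lotimes{K(x_i)}{(-)}$. The main obstacle I anticipate is not the Nakayama-type faithfulness itself, which is robust once the finiteness bookkeeping is in place, but rather the careful naturality checks needed to confirm that $\chi^K_{\Lotimes KC}$ genuinely corresponds to $\Lotimes K{\chi^R_C}$ under the adjunction isomorphisms in the derived category of the DG algebra $K$.
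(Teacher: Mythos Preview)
The paper does not give its own proof here; the statement is recorded as a Fact with the justification outsourced entirely to \cite[Lemma~A.3(a)]{christensen:dvke}. Your argument is correct and is essentially the standard one that the cited reference uses: identify $\chi^K_{\Lotimes KC}$ with $\Lotimes K{\chi^R_C}$ via adjunction and tensor-evaluation (legitimate because $K$ is perfect over $R$), then descend the isomorphism property through a Nakayama-type Koszul faithfulness step. The bookkeeping you flag---degreewise finiteness of $\HH(\Rhom CC)$ and boundedness below via the long exact sequence for $\Lotimes{K(x_i)}{-}$---is exactly what is needed, and the naturality check you worry about is routine once one traces the identity of $\Lotimes KC$ through adjunction to the unit map $C\to\Lotimes KC$.
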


\begin{lem}\label{lem190915a'}
Let $K=K^R(\x)$ be the Koszul complex on $\x=x_1,\ldots,x_n\in\m$.
Let $M\in\catdfb(R)$ be given, and let $C$ be a semidualizing $R$-complex.
\begin{enumerate}[\rm(a)]
\item \label{lem190915a'1}
$\Lotimes KM$ is $\gkdim{\Lotimes KC}$-test over $K$ if and only if $M$ is $\gcdim$-test over $R$.
\item \label{lem190915a'2}
$\Lotimes KM$ is $\gdim$-test over $K$ if and only if $M$ is $\gdim$-test over $R$.
\end{enumerate}
\end{lem}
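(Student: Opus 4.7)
Part (b) is the case $C = R$ of part (a), since $\Lotimes KR \simeq K$, so I focus on (a). The approach rests on a transfer principle for derived reflexivity, to be proved first: for any $N \in \catdfb(K)$, $N$ is derived $C$-reflexive over $R$ if and only if $N$ is derived $(\Lotimes KC)$-reflexive over $K$; note that Fact~\ref{fact190929a} ensures $\Lotimes KC$ is semidualizing over $K$, so the latter condition is meaningful.

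To establish the transfer principle, I invoke the self-duality of the Koszul complex, $\Rhom[R]KR \simeq \shift^{-n}K$, which gives $\Rhom[R]KY \simeq \shift^{-n}\Lotimes KY$ for each $Y \in \catd(R)$. Combined with the adjunction $\Rhom[R]NY \simeq \Rhom[K]N{\Rhom[R]KY}$ for a DG $K$-module $N$, this yields
$$\Rhom[R]NC \simeq \shift^{-n}\Rhom[K]N{\Lotimes KC},$$
and, iterating,
$$\Rhom[R]{\Rhom[R]NC}C \simeq \Rhom[K]{\Rhom[K]N{\Lotimes KC}}{\Lotimes KC}.$$
Homological boundedness transfers across the first isomorphism, and a naturality check shows that the biduality morphisms $\delta^N_C$ and $\delta^N_{\Lotimes KC}$ correspond under these identifications, proving the principle.

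With the transfer principle in hand, the ascent direction is immediate: if $M$ is $\gcdim$-test over $R$ and $N \in \catdfb(K)$ satisfies $\Lotimes[K]{(\Lotimes KM)}N \in \catdb(K)$, tensor cancellation gives $\Lotimes MN \simeq \Lotimes[K]{(\Lotimes KM)}N \in \catdb(R)$, while $N \in \catdfb(R)$ since $K \in \catdfb(R)$; so $N$ is derived $C$-reflexive over $R$, hence derived $(\Lotimes KC)$-reflexive over $K$. For descent, suppose $\Lotimes KM$ is $\gkdim{\Lotimes KC}$-test over $K$ and let $N \in \catdfb(R)$ satisfy $\Lotimes MN \in \catdb(R)$. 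Set $N' = \Lotimes KN$; because multiplication by each $x_i$ on $K$ is null-homotopic via $e_i$, the sequence $\x$ annihilates $\HH(N')$, so $N' \in \catdfb(K)$. Tensor cancellation gives $\Lotimes[K]{(\Lotimes KM)}{N'} \simeq \Lotimes K{(\Lotimes MN)} \in \catdb(K)$, so the hypothesis makes $N'$ derived $(\Lotimes KC)$-reflexive over $K$, hence derived $C$-reflexive over $R$ by the transfer principle. Finally, \cite[Theorem~4.4]{frankild:rrhffd} descends finite $\gcdim$ from $\Lotimes KN$ to $N$.

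The main obstacle is the biduality-morphism compatibility inside the transfer principle. The source and target isomorphisms follow mechanically from Koszul self-duality and Hom-tensor adjunction, but confirming that these identifications carry $\delta^N_C$ to $\delta^N_{\Lotimes KC}$ requires a careful naturality diagram chase, in the spirit of the verifications underlying Fact~\ref{ch150423b2} and Lemma~\ref{lem150421a}.
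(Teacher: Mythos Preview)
Your argument is correct and takes a genuinely different route from the paper. The paper does not prove your transfer principle directly; instead, for the ascent direction it argues via dualizing complexes and Auslander classes: in Case~1 ($R$ has a dualizing complex $D^R$), one shows that $N$ is derived $C$-reflexive over $R$ if and only if $N\in\cata_{\Rhom C{D^R}}(R)$, transfers Auslander-class membership along $R\to K$, and then translates back via $D^K=\Lotimes K{D^R}$. Cases~2 and~3 then reduce to Case~1 by completing (when $(\x)$ is $\m$-primary) and by enlarging $\x$ to a full generating sequence for $\m$. Your approach short-circuits all of this: Koszul self-duality $\Rhom KR\simeq\shift^{-n}K$ together with the restriction--coinduction adjunction gives the isomorphism $\Rhom NC\simeq\shift^{-n}\Rhom[K]N{\Lotimes KC}$ directly, with no dualizing complex in sight and hence no case analysis. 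The price you pay is the naturality verification for the biduality morphisms, which you correctly flag as the main obstacle; this check is routine but not entirely formal, and the paper sidesteps it by working with Auslander classes, where the relevant compatibilities are already recorded in the literature. Your descent argument, invoking \cite[Theorem~4.4]{frankild:rrhffd} to pass from finite $\gcdim$ of $\Lotimes KN$ back to $N$, matches the paper's intended ``argue as in~\cite[Theorem~3.2]{celikbas:tgp}'' in spirit.
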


\begin{proof}
Again, we focus on part~\eqref{lem190915a'1}.
For one implication, argue as in the descent result~\cite[Theorem~3.2]{celikbas:tgp}.
For the converse, assume  that $M$ is  $\gcdim$-test  over $R$.

Case 1: $R$ has a dualizing complex $D^R$.
To show that $\Lotimes{K}M$ is $\gkdim{\Lotimes KC}$-test over $K$, 
let $N\in\catdfb(K)$ be such that
$\Lotimes[K] {(\Lotimes{K}M)}N\in\catdb(K)$.
Since $K\in\catdfb(R)$, we have 
$N\in\catdfb(R)$.
Moreover, 
$\Lotimes MN\simeq\Lotimes[K] {(\Lotimes{K}M)}N\in\catdb(R)$.
As $M$ is $\gcdim$-test  over $R$, the DG module $N$ is derived $C$-reflexive over $R$,
Now argue as in the proof of Theorem~\ref{thm190913a'} using $D^R$ and $D^K=\Lotimes K{D^R}$ to prove that
$N$ is derived $\Lotimes KC$-reflexive over $K$.

Case 2: $(\x)R$ is $\m$-primary.
Proposition~\ref{thm140825ax} implies that $\Lotimes{\comp R}M$ is $\gkdim{\Lotimes{\comp R}C}$-test for $\comp R$.
Set $\comp K=\Lotimes{\comp R}K$.
Recall that $\comp R$ has a dualizing complex,
so Case 1 implies that the DG $\comp K$-module
$$\Lotimes[\comp R]{\comp K}{(\Lotimes{\comp R}M)}\simeq\Lotimes{\comp K}M\simeq\Lotimes KM$$
is $\gkdim{\Lotimes{\comp K}C}$-test for $\comp K$;
the second isomorphism here uses the assumption on $(\x)R$, which implies that
the natural morphism $K\to\comp K$ is a quasiisomorphism of DG algebras.
This quasiisomorphism furthermore implies that
this DG module
is $\gkdim{\Lotimes KC}$-test for $K$,
by restriction of scalars.

Case 3: the general case. 
Let $\y=y_1,\ldots,y_m\in\m$ be a generating sequence for $\m$.
By Case 2, the DG $K^R(\x,\y)$-module $\Lotimes{K^R(\x,\y)}M$ is $\gkdim{\Lotimes{K^R(\x,\y)}C}$-test over $K^R(\x,\y)$.
Argue as in the descent implication, from the DG algebra morphism $K\to K^R(\x,\y)$ to conclude that 
$\Lotimes KM$ is $\gkdim{\Lotimes KC}$ test over $K$, as desired.
\end{proof}

\begin{thm}\label{thm140915a'}
Assume that $\vf$ is flat with regular closed fibre such that the induced field extension $k \to l$ is algebraic.
Let $M\in\catdfb(R)$ be given,
and let $C$ be a semidualizing $R$-complex.
\begin{enumerate}[\rm(a)]
\item \label{thm140915a'1}
 $M$ is  $\gcdim$-test  over $R$ if and only if $\Lotimes SM$ is  $\gkdim{\Lotimes SM}$-test  over $S$.
\item \label{thm140915a'2}
 $M$ is   $\gdim$-test  over $R$ if and only if $\Lotimes SM$ is  $\gdim$-test  over $S$.
\end{enumerate}
\end{thm}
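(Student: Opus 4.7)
Part~\eqref{thm140915a'2} is the special case $C=R$ of part~\eqref{thm140915a'1}, so I focus on~\eqref{thm140915a'1} (and note that the target property in the statement should be read as $\gkdim{\Lotimes SC}$-test). The descent implication is immediate from Proposition~\ref{prop140825a}\eqref{prop140825a2}. For ascent, assume $M$ is $\gcdim$-test over $R$; the plan is to parallel the three-case structure of the proof of Theorem~\ref{thm140915a}, replacing each $\pd$-test ingredient with its $\gcdim$-test analog developed in this section.

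In Case~1 ($R$ and $S$ complete with $\m S=\n$), I set up the Cohen factorization diagram~\eqref{eq191118a} exactly as before, and propagate the semidualizing complex $C$ through it. First Lemma~\ref{lem190915a'}\eqref{lem190915a'1} promotes the hypothesis to the statement that $\Lotimes{K^R}M$ is $\gkdim{\Lotimes{K^R}C}$-test over $K^R$, where $\Lotimes{K^R}C$ is semidualizing by Fact~\ref{fact190929a}. Using restriction of scalars along the quasi-isomorphism $\Otimes[Q]F{K^Q}\xra\simeq K^R$ and base change along $\Otimes[Q]F{K^Q}\xra\simeq\Otimes[Q]Fk$, I transfer this $\gcdim$-test property to $\Otimes[Q]Fk$, tracking the semidualizing object through each move via Lemma~\ref{lem190929a}. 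I then apply Theorem~\ref{thm190913a'}\eqref{thm190913a'2} to the algebraic extension $k\to l$ and the finite-dimensional DG $k$-algebra $\Otimes[Q]Fk$, using the identification $\Otimes[Q']{F'}l\cong\Otimes[k]l{(\Otimes[Q]Fk)}$ coming from the pushout structure of the rightmost square of~\eqref{eq191118a}; this step is the heart of the argument. The conclusion transfers back through the bottom row of~\eqref{eq191118a} via pushouts, giving that $\Lotimes[S]{K^S}{(\Lotimes SM)}$ is $\gkdim{\Lotimes[S]{K^S}{(\Lotimes SC)}}$-test over $K^S$, and a final application of Lemma~\ref{lem190915a'}\eqref{lem190915a'1} yields that $\Lotimes SM$ is $\gkdim{\Lotimes SC}$-test over $S$.

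In Case~2 ($\m S=\n$ without the completeness hypothesis), I use the natural square relating $\vf$ to its completion $\comp\vf\colon\comp R\to\comp S$. Proposition~\ref{thm140825ax}\eqref{thm140825ax1}, applied to the Gorenstein flat map $R\to\comp R$ with the trivial (hence finite) residue field extension $k\to k$, promotes the hypothesis to $\Lotimes{\comp R}M$ being $\gkdim{\Lotimes{\comp R}C}$-test over $\comp R$. Case~1 then provides the corresponding statement over $\comp S$, and a second application of Proposition~\ref{thm140825ax}\eqref{thm140825ax1}, this time to $S\to\comp S$, descends the conclusion to $S$. Case~3 (the general $\vf$) reduces to Case~2 exactly as in Theorem~\ref{thm140825a}, by killing a regular system of parameters of the closed fibre $S/\m S$ as in Remark~\ref{disc190914a}.

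I expect the main obstacle to be the bookkeeping in Case~1: at each node of~\eqref{eq191118a} one must identify what the appropriate base change of $C$ is, verify it is semidualizing over the corresponding DG algebra, and check that the $\gcdim$-test property transfers correctly under restriction of scalars and under the pushout base-change steps. The semidualizing bookkeeping is supplied by Fact~\ref{fact190929a} and Lemma~\ref{lem190929a}, and the transfer of the $\gcdim$-test property along DG algebra quasi-isomorphisms and pushout squares should be a routine but non-automatic extension of the arguments used in the $\pd$-test setting of Theorem~\ref{thm140915a}, since the defining condition of $\gcdim$-test depends only on the derived tensor product and on the homological boundedness of the output, both of which are preserved under these operations.
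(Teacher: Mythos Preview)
Your proposal is correct and follows exactly the approach the paper intends: the paper's proof is the single line ``Argue as in the proof of Theorem~\ref{thm140915a}'', and you have accurately unpacked that directive, swapping each $\pd$-test ingredient for its $\gcdim$-analog (Lemma~\ref{lem190915a'} for Lemma~\ref{lem190915a}, Theorem~\ref{thm190913a'} for Theorem~\ref{thm190913a}, Proposition~\ref{thm140825ax} for Theorem~\ref{thm140825a}) and tracking the semidualizing complex via Fact~\ref{fact190929a} and Lemma~\ref{lem190929a}. Your observation about the typo in the statement is also correct.
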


\begin{proof}
Argue as in the proof of Theorem~\ref{thm140915a}.
\end{proof}

We conclude with the main result of this section.

\begin{thm}\label{thm190915b}
Assume that $\vf$ is Gorenstein such that the induced field extension $k \to l$ is algebraic.
Let $M\in\catdfb(R)$,
and let $C$ be a semidualizing $R$-complex.
\begin{enumerate}[\rm(a)]
\item \label{thm190915b1}
$M$ is   $\gcdim$-test  over $R$ if and only if $\Lotimes SM$ is  $\gkdim{\Lotimes SC}$-test  over $S$.
\item \label{thm190915b2}
$M$ is   $\gdim$-test  over $R$ if and only if $\Lotimes SM$ is  $\gdim$-test  over $S$.
\end{enumerate}
\end{thm}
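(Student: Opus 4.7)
The plan is to handle descent via the existing finite-flat-dimension machinery, and to handle ascent by threading through a Cohen factorization of the semicompletion, combining Theorem~\ref{thm140915a'} (which handles the flat case with regular closed fibre and algebraic residue field extension) with Proposition~\ref{thm140825ax} (which handles Gorenstein maps with finite residue field extension). For descent in part~\eqref{thm190915b1}, I would invoke Proposition~\ref{prop140825a}\eqref{prop140825a2} after noting that every Gorenstein local homomorphism has finite flat dimension: its kernel $\ker(\vf')$ in any Cohen factorization is a Gorenstein ideal, hence has finite projective dimension, and this finite flat dimension propagates through the flat completion maps.

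For ascent, assume $M$ is $\gcdim$-test over $R$. First I would choose a Cohen factorization $R \xra{\dot\vf} R' \xra{\vf'} \comp S$ of the semicompletion $\grave\vf$. Because $\vf$ is Gorenstein, $\ker(\vf')$ is a Gorenstein ideal of $R'$, so $\vf'$ is a Gorenstein surjection; because $R'$ has residue field $l$, the residue field extension of $\dot\vf$ is precisely the algebraic extension $k \to l$. Next, Theorem~\ref{thm140915a'}\eqref{thm140915a'1} applied to $\dot\vf$ gives that $\Lotimes{R'}M$ is $\gkdim{\Lotimes{R'}C}$-test over $R'$. Then Proposition~\ref{thm140825ax}\eqref{thm140825ax1} applied to $\vf'$ (whose residue field extension $l \to l$ is trivially finite) upgrades this to say that $\Lotimes[R']{\comp S}{\Lotimes{R'}M} \simeq \Lotimes{\comp S}M$ is $\gkdim{\Lotimes{\comp S}C}$-test over $\comp S$. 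Finally, Proposition~\ref{thm140825ax}\eqref{thm140825ax1} applied to the flat Gorenstein completion map $S \to \comp S$ (again with trivial residue field extension), used in its ``only if'' direction, descends the property back from $\comp S$ to $S$: since $\Lotimes[S]{\comp S}{\Lotimes SM} \simeq \Lotimes{\comp S}M$ is $\gkdim{\Lotimes{\comp S}C}$-test over $\comp S$, the biconditional forces $\Lotimes SM$ to be $\gkdim{\Lotimes SC}$-test over $S$, as required.

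Part~\eqref{thm190915b2} should follow as the special case $C = R$ of part~\eqref{thm190915b1}, using $\Lotimes SR \simeq S$ so that $\gkdim{R}$-test over $R$ coincides with $\gdim$-test over $R$, and likewise on $S$. The main point requiring care throughout is verifying the Gorenstein and residue-field hypotheses at each stage of the factorization, in particular recognizing that both $\vf'$ and $S \to \comp S$ legitimately qualify for Proposition~\ref{thm140825ax}, and that its biconditional is being used first in the ascent direction (across $\vf'$) and then in the descent direction (across $S \to \comp S$).
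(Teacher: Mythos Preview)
Your proposal is correct and follows essentially the same route as the paper: descent via Proposition~\ref{prop140825a}\eqref{prop140825a2}, then ascent by threading a Cohen factorization of $\grave\vf$ through Theorem~\ref{thm140915a'} (for $R\to R'$) and Proposition~\ref{thm140825ax} (for $R'\to\comp S$), finishing by descending along $S\to\comp S$. Two small remarks: for the final descent the paper invokes Proposition~\ref{prop140825a}\eqref{prop140825a2} directly rather than the biconditional of Proposition~\ref{thm140825ax}, which is slightly cleaner since only finite flat dimension is needed; and in your last step you want the ``if'' direction of the biconditional (from $\comp S$ back to $S$), not the ``only if'' direction as you wrote.
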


\begin{proof}
Again we focus on part~\eqref{thm190915b1}.
One implication is from Proposition~\ref{prop140825a}\eqref{prop140825a2}.
For the converse, assume that $M$ is  $\gcdim$-test  over $R$.
Let $R\to R'\to \comp S$ be a Cohen factorization of the semicompletion $\grave\vf\colon R\to\comp S$,
This provides isomorphisms
\begin{gather*}
\Lotimes[R']{\comp S}{(\Lotimes{R'}C)}\simeq\Lotimes {\comp S}C\simeq\Lotimes[S]{\comp S}{(\Lotimes{S}C)}\\
\Lotimes[R']{\comp S}{(\Lotimes{R'}M)}\simeq\Lotimes {\comp S}M\simeq\Lotimes[S]{\comp S}{(\Lotimes{S}M)}.
\end{gather*}
Theorem~\ref{thm140915a'} implies that $\Lotimes{R'}M$ is $\gkdim{\Lotimes{R'}C}$-test over $R'$.
Because of the displayed isomorphisms,
Proposition~\ref{thm140825ax} implies that $\Lotimes {\comp S}M$
is $\gkdim{\Lotimes {\comp S}C}$-test over $\comp S$.
Thus, 
$\Lotimes SM$ is $\gkdim{\Lotimes SC}$-test over $S$ by Proposition~\ref{prop140825a}\eqref{prop140825a2}.
\end{proof}

\section*{Acknowledgments}

I am grateful to 
Olgur Celikbas,
Craig Huneke,
Srikanth Iyengar,
Mohsen Gheibi, and
Justin Lyle
for useful conversations about this work. 
Much of this work was completed around the conference Morgantown Algebra Days (MAD) 2019.
I appreciate the fact that the organizers of MAD 2019  created a productive workshop environment. 
Lastly, I am thankful for all the hard work Roger and Sylvia Wiegand have done over their careers to create/discover
inspiring mathematics and to build a welcoming and inclusive environment in the commutative algebra community.
They are two of the kindest people I know.


\begin{thebibliography}{10}

\bibitem{altmann:csc}
H.~Altmann and S.~Sather-Wagstaff, \emph{Chains of semidualizing complexes}, in
  preparation.

\bibitem{auslander:adgeteac}
M.~Auslander, \emph{Anneaux de {G}orenstein, et torsion en alg\`ebre
  commutative}, S\'eminaire d'Alg\`ebre Commutative dirig\'e par Pierre Samuel,
  vol. 1966/67, Secr\'etariat math\'ematique, Paris, 1967. \MR{37 \#1435}

\bibitem{auslander:smt}
M.~Auslander and M.\ Bridger, \emph{Stable module theory}, Memoirs of the
  American Mathematical Society, No. 94, American Mathematical Society,
  Providence, R.I., 1969. \MR{42 \#4580}

\bibitem{avramov:ifr}
L.~L. Avramov, \emph{Infinite free resolutions}, Six lectures on commutative
  algebra (Bellaterra, 1996), Progr. Math., vol. 166, Birkh\"auser, Basel,
  1998, pp.~1--118. \MR{99m:13022}

\bibitem{avramov:cslrec3}
L.~L. Avramov, \emph{A cohomological study of local rings of embedding codepth
  3}, J. Pure Appl. Algebra \textbf{216} (2012), no.~11, 2489--2506.
  \MR{2927181}

\bibitem{MR2592508}
L.~L. Avramov, R.-O. Buchweitz, S.~B. Iyengar, and C.~Miller, \emph{Homology of
  perfect complexes}, Adv. Math. \textbf{223} (2010), no.~5, 1731--1781.
  \MR{2592508}

\bibitem{avramov:glh}
L.~L. Avramov and H.-B.\ Foxby, \emph{Gorenstein local homomorphisms}, Bull.
  Amer. Math. Soc. (N.S.) \textbf{23} (1990), no.~1, 145--150. \MR{1020605
  (90k:13009)}

\bibitem{avramov:lgh}
\bysame, \emph{Locally {G}orenstein homomorphisms}, Amer. J. Math. \textbf{114}
  (1992), no.~5, 1007--1047. \MR{1183530 (93i:13019)}

\bibitem{avramov:glp}
\bysame, \emph{Grothendieck's localization problem}, Commutative algebra:
  syzygies, multiplicities, and birational algebra (South Hadley, MA, 1992),
  Contemp. Math., vol. 159, Amer. Math. Soc., Providence, RI, 1994, pp.~1--13.
  \MR{1266174 (94m:13011)}

\bibitem{avramov:rhafgd}
\bysame, \emph{Ring homomorphisms and finite {G}orenstein dimension}, Proc.
  London Math. Soc. (3) \textbf{75} (1997), no.~2, 241--270. \MR{98d:13014}

\bibitem{avramov:cmporh}
\bysame, \emph{Cohen-{M}acaulay properties of ring homomorphisms}, Adv. Math.
  \textbf{133} (1998), no.~1, 54--95. \MR{1492786 (99c:13043)}

\bibitem{avramov:dgha}
L.~L. Avramov, H.-B.\ Foxby, and S.\ Halperin, \emph{Differential graded
  homological algebra}, in preparation.

\bibitem{avramov:dalp}
\bysame, \emph{Descent and ascent of local properties along homomorphisms of
  finite flat dimension}, J. Pure Appl. Algebra \textbf{38} (1985), no.~2-3,
  167--185. \MR{814175 (87b:13026)}

\bibitem{avramov:solh}
L.~L. Avramov, H.-B.\ Foxby, and B.\ Herzog, \emph{Structure of local
  homomorphisms}, J. Algebra \textbf{164} (1994), 124--145. \MR{95f:13029}

\bibitem{avramov:bsolrhoffd}
L.~L. Avramov, H.-B.Foxby, and J.\ Lescot, \emph{Bass series of local ring
  homomorphisms of finite flat dimension}, Trans. Amer. Math. Soc. \textbf{335}
  (1993), no.~2, 497--523. \MR{93d:13026}

\bibitem{avramov:tlg}
L.~L. Avramov and S.\ Halperin, \emph{Through the looking glass: a dictionary
  between rational homotopy theory and local algebra}, Algebra, algebraic
  topology and their interactions ({S}tockholm, 1983), Lecture Notes in Math.,
  vol. 1183, Springer, Berlin, 1986, pp.~1--27. \MR{846435 (87k:55015)}

\bibitem{avramov:holh}
L.~L. Avramov, S.\ Iyengar, and C.\ Miller, \emph{Homology over local
  homomorphisms}, Amer. J. Math. \textbf{128} (2006), no.~1, 23--90.
  \MR{2197067}

\bibitem{avramov:phcnr}
L.~L. Avramov, S.~B. Iyengar, S.~Nasseh, and S.~Sather-Wagstaff,
  \emph{Persistence of homology over commutative noetherian rings}, in
  preparation.

\bibitem{avramov:htecdga}
\bysame, \emph{Homology over trivial extensions of commutative {DG} algebras},
  Comm. Algebra \textbf{47} (2019), no.~6, 2341--2356, see also
  \texttt{arxiv:1508.00748}. \MR{3957101}

\bibitem{avramov:psmlrsec}
L.~L. Avramov, A.~R. Kustin, and M.~Miller, \emph{Poincar\'e series of modules
  over local rings of small embedding codepth or small linking number}, J.
  Algebra \textbf{118} (1988), no.~1, 162--204. \MR{961334 (89k:13013)}

\bibitem{beck:sgidgca}
K.~Beck and S.~Sather-Wagstaff, \emph{A somewhat gentle introduction to
  differential graded commutative algebra}, Connections Between Algebra,
  Combinatorics, and Geometry, Proceedings in Mathematics and Statistics,
  vol.~76, Springer, New York, Heidelberg, Dordrecht, London, 2014, pp.~3--99.
  \MR{3213517}

\bibitem{celikbas:tgp}
O.~Celikbas and S.~Sather-Wagstaff, \emph{Testing for the {G}orenstein
  property}, Collect. Math. \textbf{67} (2016), no.~3, 555--568. \MR{3536062}

\bibitem{christensen:scatac}
L.~W. Christensen, \emph{Semi-dualizing complexes and their {A}uslander
  categories}, Trans. Amer. Math. Soc. \textbf{353} (2001), no.~5, 1839--1883.
  \MR{2002a:13017}

\bibitem{christensen:dcmca}
L.~W. Christensen, H.-B. Foxby, and H.~Holm, \emph{Derived category methods in
  commutative algebra}, in preparation.

\bibitem{christensen:dvke}
L.~W. Christensen and S.\ Sather-Wagstaff, \emph{Descent via {K}oszul
  extensions}, J. Algebra \textbf{322} (2009), no.~9, 3026--3046. \MR{2567408}

\bibitem{felix:rht}
Y.\ F{\'e}lix, S.\ Halperin, and J.-C.\ Thomas, \emph{Rational homotopy
  theory}, Graduate Texts in Mathematics, vol. 205, Springer-Verlag, New York,
  2001. \MR{1802847}

\bibitem{foxby:gmarm}
H.-B.\ Foxby, \emph{Gorenstein modules and related modules}, Math. Scand.
  \textbf{31} (1972), 267--284 (1973). \MR{48 \#6094}

\bibitem{frankild:rrhffd}
A.\ Frankild and S.\ Sather-Wagstaff, \emph{Reflexivity and ring homomorphisms
  of finite flat dimension}, Comm. Algebra \textbf{35} (2007), no.~2, 461--500.
  \MR{2294611}

\bibitem{golod:gdagpi}
E.~S. Golod, \emph{{$G$}-dimension and generalized perfect ideals}, Trudy Mat.
  Inst. Steklov. \textbf{165} (1984), 62--66, Algebraic geometry and its
  applications. \MR{85m:13011}

\bibitem{hartshorne:rad}
R.~Hartshorne, \emph{Residues and duality}, Lecture Notes in Mathematics, No.
  20, Springer-Verlag, Berlin, 1966. \MR{36 \#5145}

\bibitem{iyengar:golh}
S.\ Iyengar and S.\ Sather-Wagstaff, \emph{G-dimension over local
  homomorphisms. {A}pplications to the {F}robenius endomorphism}, Illinois J.
  Math. \textbf{48} (2004), no.~1, 241--272. \MR{2048224 (2005c:13016)}

\bibitem{kawasaki:mns}
T.~Kawasaki, \emph{On {M}acaulayfication of {N}oetherian schemes}, Trans. Amer.
  Math. Soc. \textbf{352} (2000), no.~6, 2517--2552. \MR{1707481 (2000j:14077)}

\bibitem{matsumura:crt}
H.\ Matsumura, \emph{Commutative ring theory}, second ed., Studies in Advanced
  Mathematics, vol.~8, University Press, Cambridge, 1989. \MR{90i:13001}

\bibitem{nasseh:ldgm}
S.~Nasseh and S.~Sather-Wagstaff, \emph{Liftings and quasi-liftings of {DG}
  modules}, J. Algebra \textbf{373} (2013), 162--182. \MR{2995021}

\bibitem{nasseh:egdgm}
\bysame, \emph{Extension groups for {DG} modules}, Comm. Algebra \textbf{45}
  (2017), no.~10, 4466--4476. \MR{3640821}

\bibitem{nasseh:gart}
\bysame, \emph{Geometric aspects of representation theory for {DG} algebras:
  answering a question of {V}asconcelos}, J. Lond. Math. Soc. (2) \textbf{96}
  (2017), no.~1, 271--292. \MR{3687949}

\bibitem{vasconcelos:dtmc}
W.~V. Vasconcelos, \emph{Divisor theory in module categories}, North-Holland
  Publishing Co., Amsterdam, 1974, North-Holland Mathematics Studies, No. 14,
  Notas de Matem\'atica No. 53. [Notes on Mathematics, No. 53]. \MR{0498530 (58
  \#16637)}

\bibitem{verdier:cd}
J.-L.\ Verdier, \emph{Cat\'{e}gories d\'{e}riv\'{e}es}, SGA 4$\frac{1}{2}$,
  Springer-Verlag, Berlin, 1977, Lecture Notes in Mathematics, Vol. 569,
  pp.~262--311. \MR{57 \#3132}

\bibitem{verdier:1}
\bysame, \emph{Des cat\'egories d\'eriv\'ees des cat\'egories ab\'eliennes},
  Ast\'erisque (1996), no.~239, xii+253 pp. (1997), With a preface by Luc
  Illusie, Edited and with a note by Georges Maltsiniotis. \MR{98c:18007}

\bibitem{yassemi:gd}
S.\ Yassemi, \emph{G-dimension}, Math. Scand. \textbf{77} (1995), no.~2,
  161--174. \MR{97d:13017}

\end{thebibliography}
\providecommand{\bysame}{\leavevmode\hbox to3em{\hrulefill}\thinspace}
\providecommand{\MR}{\relax\ifhmode\unskip\space\fi MR }
\providecommand{\MRhref}[2]{%
  \href{http://www.ams.org/mathscinet-getitem?mr=#1}{#2}
}
\providecommand{\href}[2]{#2}

\end{document}